\documentclass[11pt]{amsart}
\usepackage{amssymb,amsmath,amsfonts,amscd,euscript}

\newcommand{\nc}{\newcommand}

\numberwithin{equation}{section}
\newtheorem{thm}{Theorem}[section]
\newtheorem{prop}[thm]{Proposition}
\newtheorem{lem}[thm]{Lemma}

\theoremstyle{remark}
\newtheorem{rem}[thm]{Remark}

\nc{\gl}{\mathfrak{gl}}
\nc{\GL}{\mathfrak{GL}}
\nc{\g}{\mathfrak{g}}
\nc{\gh}{\widehat\g}
\nc{\h}{\mathfrak{h}}
\nc{\la}{\lambda}
\nc{\C}{\mathbb C }
\nc{\Z}{\mathbb Z }
\nc{\N}{\mathbb N }
\nc{\R}{\mathbb R }
\nc{\Q}{\mathbb Q }
\nc{\al}{\alpha }

\nc{\oM}{{\overline{\mathcal{M}}}}
\nc{\ta}{\theta}
\nc{\ve}{\varepsilon}
\nc{\ch}{{\mathop {\rm ch}}}
\nc{\Tr}{{\mathop {\rm Tr}\,}}
\nc{\Id}{{\mathop {\rm Id}}}
\nc{\ad}{{\mathop {\rm ad}}}
\nc{\bra}{\langle}
\nc{\ket}{\rangle}
\nc{\x}{{\bf x}}
\nc{\pa}{\partial}
\nc{\ld}{\ldots}
\nc{\cd}{\cdots}
\nc{\hk}{\hookrightarrow}
\nc{\T}{\otimes}
\nc{\be}{\begin{equation}}
\nc{\en}{\end{equation}}
\nc{\ben}{\begin{equation*}}
\nc{\enn}{\end{equation*}}
\nc{\gr}{\mathrm{gr}}
\nc{\ov}{\overline}

\nc{\cO}{\mathcal O}
\nc{\Cl}{\mathcal{C}l}
\def\un{{1\!\! 1}}

\begin{document}

\title[Givental symmetries and KP tau-functions]
{Givental symmetries of Frobenius manifolds and multi-component KP tau-functions}

\author{Evgeny Feigin}
\address{Evgeny Feigin:\newline
Tamm Theory Division,
Lebedev Physics Institute,\newline
Leninisky prospect, 53,
119991, Moscow, Russia
}
\email{evgfeig@gmail.com}

\author{Johan van de Leur}
\address{Johan van de Leur: \newline Mathematical Institute,
University of Utrecht,\newline
P.~O.~Box 80010, 3508 TA Utrecht,
The Netherlands}
\email{J.W.vandeLeur@uu.nl}

\author{Sergey Shadrin}

\address{Sergey Shadrin:\newline
Korteweg-de~Vries Institute for Mathematics,
University of Amsterdam,\newline
P.~O.~Box 94248, 1090 GE Amsterdam,
The Netherlands\newline
\indent and\newline
Department of Mathematics, Institute of System Research,\newline
Nakhimovsky prospekt 36-1, Moscow 117218, Russia}
\email{s.shadrin@uva.nl, shadrin@mccme.ru}

\begin{abstract}
We establish a link between two different constructions
of the action of the twisted loop group on the space of Frobenius structures.
The first construction (due to Givental) describes the action of the
twisted loop group on the partition functions of formal Gromov-Witten theories.
The explicit formulas for the corresponding tangent action were computed by Y.-P.~Lee.
The second construction (due to van de Leur) describes the action of the same
group on the space of Frobenius structures via the multi-component KP hierarchies.
Our main theorem states that  the genus zero restriction of the Y.-P.~Lee formulas
coincides with the tangent  van de Leur action.
\end{abstract}

\maketitle

\tableofcontents

\section*{Introduction}
The formal Gromov-Witten theory was developed by Givental in~\cite{G1} as a very convenient
formalism to encode higher genera of Gromov-Witten potential in terms of genus $0$ data
associated to the underlying Frobenius manifold. In particular examples of Gromov-Witten
theories of Fano varieties, Givental's formulas reflect the structure of computations made
via localization technique~\cite{G4}. It was proved recently by Teleman~\cite{T} that in
the semisimple
case the partition function of an arbitrary cohomological field theory coincides with some
formal Gromov-Witten potential in the sense of Givental.

The core of Givental's theory is the action of the twisted loop
group of $GL(n)$ on the space of tame partition functions.
Equivalently, a half of the action of Givental's group can be
described as an action on cohomological field theories; there are
formulas for this action given explicitly in terms of tautological
classes on $\oM_{g,n}$~\cite{Kaz,T,KKP}. The key computation here was made
by Y.-P.~Lee, who provided explicit formulas for the Lie algebra
action of the Givental group~\cite{Lee1}, \cite{Lee2} (in a
different way it was done also by Chen, Kontsevich, and Schwarz,
see~\cite{CKS}).

In general, the Givental theory has appeared to be one of the principle tools
in the study of a variety of questions arising in relations to mirror symmetry,
Frobenius manifolds, usual and orbifold Gromov-Witten theory, geometry of $r$-spin
structure, and, more generally, quantum singularity theory, see
e.~g.~\cite{CZ,CCIT,CR,FJR,T} and many other recent preprints.
In this paper, however, we restrict the discussion to the Givental group action
(or, rather, Y.-P.~Lee's Lie algebra action) on non-homogeneous formal
Frobenius manifolds, that is, the semi-simple solutions of the WDVV equation,
not necessary homogeneous, but with flat unit.

A bit earlier than Givental, the second author, van de Leur, has observed that
in some special situation the equations of the multi-component KP hierarchy,
written for the wave functions, specialize to the so-called Darboux-Egoroff equations,
which is an equivalent way to write WDVV equations in canonical coordinates.
The case when it works is precisely the case when one considers the twisted
loop group of $GL(n)$ orbit of the vacuum vector rather than a generic
element of the semi-infinite Grassmannian. Using Dubrovin's formulas, one
can associate then a non-homogeneous formal Frobenius manifold to an element of
the twisted loop group of $GL(n)$.

So, we have two different actions of the same group on the same
spaces. It is a natural question to compare them. This question is
especially important, since it allows to link better two
completely different points of view on the theory of Frobenius
manifolds. Indeed, in applications to enumerative geometry and
string theory, Frobenius manifolds appear more naturally in flat
coordinates. Meanwhile, from the point of view of integrable
hierarchies, Frobenius manifolds appear more naturally in canonical
coordinates. The interplay between these two different points of view
was enormously developed by Dubrovin~\cite{D1,D}, and there is a huge
outcome of translating problems and arguments from one language to
another, see e.g.~\cite{DZ1,DZ2}. However, direct translation from one language to another
is usually extremely difficult. So, an interpretation in both
approaches of the same group action is going to be very helpful,
especially since the action is transitive on semi-simple Frobenius
manifolds.

The main result of the paper is that we identify the Lie algebra actions
coming from Givental's and van de Leur's approaches, that is,
we derive explicitly Y.-P.~Lee's formulas from equations of multi-component
KP hierarchy. While the semi-simple orbits are known in advance to
be the same in both approaches, the group action appears to be different,
but in a completely controlled way. That is, we construct an explicit isomorphism
of the tangent actions of Lie algebras. Let us also
mention here that the Y.-P.~Lee formulas for the Lie algebra action for solutions
of the WDVV equation written in canonical coordinates in terms
of wave functions of multi-component KP hierarchy appears to be very simple and elegant.

\subsection*{The content of the paper}

Let us first describe the main theorem that we prove in this paper. All necessary definitions are given in the main text.

Givental group action can be considered as an action of the elements $A(\zeta)\in \overline{GL}_\infty^{(\pm)}$,
$A^t(-\zeta)A(\zeta)=\Id$, on the space formal Frobenius structures in genus $0$ with flat unit, that is,
roughly speaking, on the space of formal solutions $F(t^1,\dots,t^n)$ of the WDVV equation.
The Lie derivatives of the Givental action on $F$ can be written in terms of so-called deformed
flat coordinates, or, in other language, in terms of generating functions for
correlators with descendants at one point. We denote them by $\theta_\mu^{(d)}$.

The construction of van de Leur associates to any group element
$A(t)\in \overline{GL}_\infty^{(\pm)}$, $A^t(-t)A(t)=\Id$, some Frobenius structure $F$
and a collection of $\theta_\mu^{(d)}$, both in canonical coordinates.
The formulas for $F$, $\theta_\mu^{(d)}$, and flat coordinates are given
in terms of a bit modified wave functions of multi-component KP hierarchy, so-called $\Psi$-function, $\Psi=\Psi(A)$.

For any $A(t)\in \overline{GL}_\infty^{(\pm)}$ and $a(t)\in \bar\gl_\infty^{\pm}$, we compute an
explicit expression for
\begin{equation}\label{eq:eps-der}
\left. \frac{\partial}{\partial\varepsilon} \Psi(A\exp(\varepsilon a)) \right |_{\varepsilon=0}.
\end{equation}
that can be written down in terms of $a$ and $\Psi(A)$. This allows us also to express a
first order $\varepsilon$-deformation of $F$ in terms of $\Psi$-functions.
When we rewrite these formulas in terms of $\theta_\mu^{(d)}$ in
flat coordinates, we obtain exactly the reduction to genus $0$
of the formulas of Y.-P.~Lee for the tangent Givental action.

The paper is organized in the following way.

In Section $1$ we review main definitions and constructions of semi-infinite wedge
space and multli-component bosonization.

In Section $2$ we compute the restriction of the Y.-P.~Lee vector fields
to the space of generating functions of correlators in genus $0$ with no descendants,
that is, the expressions of the tangent Givental action on $F$ in terms of $\theta_\mu^{(d)}$.
We also rewrite these formulas in terms of KP modified wave functions $\Psi$.

In Section $3$ we compute explicitly the derivatives of the multi-component KP $\Psi$-functions
with respect to the twisted loop Lie algebra, i.~e., we compute explicitly~\eqref{eq:eps-der}.

Finally, in Section $4$, we compare the formulas from Sections $2$ and $3$ and
prove our main theorem.

Though we attempt to make our paper self-contained, some background on both approached to
the group action on Frobenius manifolds might be helpful. In addition to the basic references
given in introduction and throughout the text, we refer the reader to~\cite{C,F,vdLM},
where some further aspects of both approaches in genus $0$ are discussed in detail.

\section{Semi-infinite forms and multi-component bosonization}
In this section we recall the construction of multi-component
bosonization following \cite{KvdL,KvdL2} (see also \cite{DJKM}, \cite{JM}).
\subsection{Semi-infinite forms}
\label{semi}
Let $V$ be an infinite-dimensional vector space with a basis
$v_i$, $i\in\Z+\frac{1}{2}$ and
$V^*=\bigoplus_{i\in\Z+\frac{1}{2}} (\C v_i)^*$ be its
restricted dual. Let $F=\bigwedge^{\frac{\infty}{2}} V$ be the space
of semi-infinite forms with a basis given by semi-infinite monomials of the
form
\begin{equation}
v_{i_1}\wedge v_{i_2}\wedge\dots, \quad i_1>  i_2> \dots,
\end{equation}
and $i_{l+1}=i_l-1$  for $l$ big enough.
For $m\in\Z$ we set $|m\ket=v_{m-\frac{1}{2}}\wedge v_{m-\frac{3}{2}}\wedge\dots$.
For $v\in V$ and $\xi\in V^*$ the wedging
and contracting operators $\psi_v$ and $\psi_\xi$ are defined as follows:
\begin{gather*}
\psi_v (v_{i_1}\wedge v_{i_2}\wedge \dots) = v\wedge v_{i_1}\wedge v_{i_2}\wedge\dots,\\
\psi_\xi (v_{i_1}\wedge v_{i_2} \wedge\dots)=\sum_{l=1}^\infty (-1)^{l+1} \xi(v_{i_l})
v_{i_1}\wedge\dots\wedge v_{i_{l-1}}\wedge v_{i_{l+1}}\wedge\dots.
\end{gather*}
Note that the vector $v\wedge v_{i_1}\wedge v_{i_2}\wedge \dots$
can be rewritten in terms of the basis vectors using bi-linearity
and  skew-symmetry of the wedge product. We set
\begin{equation}
\psi^+_i=\psi_{v_{-i}},\quad \psi^-_i=\psi_{v_i^*},
\end{equation}
where $v_i^*\in V^*$ are defined by $v_i^*(v_j)=\delta_{i,j}$.
The operators $\psi_i^+, \psi^-_j$   satisfy the relations
\begin{equation}
\psi_i^\lambda\psi_j^\mu+\psi_j^\mu\psi_i^\lambda=\delta_{\lambda,-\mu}\delta_{i,-j}
\end{equation}
for all $i,j\in\Z + \frac{1}{2}$ and $\lambda,\mu=+,-$.
Thus $\psi^+_i$ and $\psi^-_j$ generate a Clifford algebra, which we denote by $\Cl$ .
We note that $F$ is an irreducible $\Cl$ module. For instance, $F=\Cl\cdot |0\ket$ with
the relations $\psi^\pm_j |0\ket=0$ for $j>0$.
Using the action of $\Cl$ we define
the energy and charge gradings on $F$. Namely set
\begin{align}
\text{charge} (|0\ket) & =0,
& \text{energy}(|0\ket) &=0,  \\
 \text{charge} (\psi^\pm_i) & =\pm 1,
&\text{energy} (\psi^\pm_i) & =-i.\notag
\end{align}
Let
\begin{align*}
F & =\bigoplus_{m\in\Z} F^{(m)},
& F^{(m)} & =\bigoplus_{d=\frac{m^2}2}^\infty F^{(m)}_d
\end{align*}
be charge and energy decompositions.
For instance,
$|m\ket\in F^{(m)}_{\frac{m^2}2}$.
For later use we introduce the left vacuum $\langle 0|\in F^*$ such that the vacuum expectation value
$\langle 0||0\ket=1$ and $\langle 0| v=0$ if energy of $v$ is positive.
We note that the operator $\langle 0|\psi^\pm_j$ vanishes for $j<0$.

We can also define representations of the infinite complex matrix
group $GL_\infty$ and its Lie algebra $\gl_\infty$ on $F$. Namely,
$GL_\infty$ is the group of invertible matrices
$A=(A_{ij})_{{i,j\in\Z +\frac{1}{2}}}$ such that all but a finite
number of $A_{ij}-\delta_{ij}$ are $0$. Then
\begin{equation}
\label{Lg}
A(v_{i_1}\wedge v_{i_2}\wedge  \dots)=Av_{i_1}\wedge Av_{i_2}\wedge\dots.
\end{equation}
The algebra  $\gl_\infty$ is the Lie algebra of all matrices  $a=(a_{ij})$ with a finite number
of non-vanishing entries. The action on $F$ is given by the formula
\begin{equation}
\label{La}
a(v_{i_1}\wedge v_{i_2}\wedge\dots)=\sum_{s=1}^\infty v_{i_1}\wedge\dots v_{i_{s-1}}\wedge av_{i_s}\wedge
v_{i_{s+1}}\wedge\dots.
\end{equation}
For any $A\in GL_\infty$ we can attach the element $\tau\in F$ which is given by $\tau=A|0\ket$.
We note that $A|0\ket\in F^{(0)}$ for any $A$.
The following proposition is standard:

\begin{prop}
A nonzero element $\tau$ of $F^{(0)}$ is equal to $A|0\ket$ for some $A\in GL_\infty$
if and only if the following equation holds in $F\T F$:
\begin{equation}
\sum_{i\in\Z+\frac{1}{2}} \psi^+_i \tau\T \psi^-_i \tau=0.
\end{equation}
\end{prop}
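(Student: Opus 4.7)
The plan is to prove both implications through an invariance argument centered on the quadratic operator
\[
S \;:=\; \sum_{i\in\Z+\tfrac12} \psi^+_i \T \psi^-_i
\]
acting on (a suitable completion of) $F\T F$; the identity we wish to verify is precisely $S(\tau\T\tau)=0$.

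For the forward direction I would first check that $S$ intertwines the diagonal action of $\gl_\infty$, i.e.\ that $[a\T\Id+\Id\T a,\,S]=0$ for every $a\in\gl_\infty$. Writing $a$ as a normal-ordered quadratic in the fermions and repeatedly applying the canonical anticommutation relations $\{\psi^\lambda_i,\psi^\mu_j\}=\delta_{\lambda,-\mu}\delta_{i,-j}$, the contributions from the two factors in $F\T F$ cancel pairwise. Exponentiating yields $(A\T A)\,S = S\,(A\T A)$ for every $A\in GL_\infty$. A direct inspection, organized by charge and energy, then confirms $S(|0\ket\T|0\ket)=0$. Combining the two gives $S(A|0\ket\T A|0\ket)=(A\T A)\,S(|0\ket\T|0\ket)=0$, which is the required identity.

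For the converse the strategy is to show that any $\tau\in F^{(0)}$ satisfying the bilinear identity is necessarily \emph{decomposable}: up to a nonzero scalar, $\tau$ equals a semi-infinite wedge $w_1\wedge w_2\wedge w_3\wedge\cdots$ with $w_k=v_{-k+1/2}$ for all $k$ sufficiently large. To extract these vectors I would introduce the annihilator subspace
\[
\mathrm{Ann}(\tau)\;=\;\{\xi\in V^* : \psi_\xi\tau=0\}
\]
and use the bilinear identity to deduce that $\mathrm{Ann}(\tau)=W_\tau^{\perp}$ for a subspace $W_\tau\subset V$ which agrees with $\bigoplus_{i<0}\C v_i$ outside a finite-dimensional correction; the charge condition $\tau\in F^{(0)}$ forces this correction to have index zero. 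Once $W_\tau$ is identified, $\tau$ is proportional to the semi-infinite wedge of any compatible basis of $W_\tau$, and the required $A\in GL_\infty$ is any invertible map on $V$ carrying the standard basis of $\bigoplus_{i<0}\C v_i$ to such a basis (extended by the identity outside the active block).

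The main obstacle will be the converse direction, and specifically the identification of $\mathrm{Ann}(\tau)$ with $W_\tau^{\perp}$ for a subspace of the correct commensurability class: this is the Plücker characterization of the Sato (semi-infinite) Grassmannian inside $F$, with the bilinear identity playing the role of the infinite family of Plücker relations. Making this rigorous requires care in interpreting the formal sum defining $S$ so that $S(\tau\T\tau)$ converges term by term in $F\T F$, together with a clean bijection between subspaces of $V$ commensurable with $\bigoplus_{i<0}\C v_i$ and the $GL_\infty$-orbit of the vacuum in $F^{(0)}$.
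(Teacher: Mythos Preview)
The paper does not actually prove this proposition: it is introduced with ``The following proposition is standard'' and stated without any argument. So there is nothing to compare against on the paper's side.

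Your sketch is the standard proof one finds in the references the paper cites (e.g.\ Kac--Raina, \emph{Bombay lectures}). The forward direction via $(A\T A)S=S(A\T A)$ and $S(|0\ket\T|0\ket)=0$ is exactly right. For the converse, the outline you give is also the usual one, but be aware that the key step is not quite ``identify $\mathrm{Ann}(\tau)$ with $W_\tau^\perp$''; rather, one fixes a nonzero $\xi\in V^*$ with $\psi_\xi\tau\neq 0$ and uses the bilinear identity to manufacture a vector $w\in V$ with $\tau=w\wedge(\psi_\xi\tau)$, then iterates. The decomposability is proved by this inductive wedge-extraction argument, not by first constructing $W_\tau$ abstractly and then arguing $\tau$ must be its Pl\"ucker image. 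If you carry out the converse as you wrote it, you will find you need the bilinear identity precisely to produce such a $w$, so the logic runs the other way around. Once you make that adjustment the argument goes through; the finiteness issues you flag are genuine but mild, since $\tau$ has only finitely many nonzero components and each application of $\psi^\pm_i$ shifts energy in a controlled way.
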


In what follows we will need a slight modification of the
construction above. Namely, let $\ov{GL}_\infty^+$ and
$\ov{GL}_\infty^-$ be the groups of invertible lower- and
upper-triangular matrices with units along the main diagonal.
Also, let $\bar \gl_\infty^+$ ($\bar \gl_\infty^-$) be the Lie
algebra of all strictly lower-(upper)triangular matrices. So in
particular,
\begin{equation}
a(v_i)=\sum_{j<i} a_{ji} v_j \text { for } a\in\bar\gl_\infty^-,\quad
a(v_i)=\sum_{j>i} a_{ji} v_j \text { for } a\in\bar\gl_\infty^+
\end{equation}
Note that we do not assume that the number of non-trivial entries of matrices
from the groups and algebras above is finite.

There are no problems with the action of the group
$\ov{GL}_\infty^-$ and the algebra $\bar\gl^-_\infty$. Namely, the
formulas \eqref{Lg} and \eqref{La} still work. In order to define
the action of the group $\ov{GL}_\infty^+$ and the algebra
$\bar\gl^+_\infty$ we need to complete the space $F$ with respect
to the energy grading. Namely, we consider the space
\begin{equation}
\bar F=\bigoplus_{m\in\Z} \bar F^{(m)},\quad
\bar F^{(m)}=\prod_{d=\frac{m^2}2}^\infty F^{(m)}_d
\end{equation}
So an element of $\bar F^{(m)}$ has a component with any possible energy in the
charge $m$ subspace $F^{(m)}$.
\begin{lem}
The formulas \eqref{Lg} and \eqref{La} define the action of
the group $\ov{GL}_\infty^+$ and
the algebra $\bar\gl^+_\infty$ on $\bar F$.
\end{lem}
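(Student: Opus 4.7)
The plan is to verify, for each semi-infinite monomial and hence by linearity for each element of $\bar F$, that the right-hand sides of \eqref{Lg} and \eqref{La} define formal sums with only finitely many nonzero contributions to each finite-dimensional energy eigenspace $F^{(m)}_d$. Once this is established, the action axioms follow from the finite-matrix case by a standard limit.

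First I would treat the Lie algebra action. Given $a\in\bar\gl_\infty^+$ and a semi-infinite monomial $\omega=v_{i_1}\wedge v_{i_2}\wedge\cdots$ of charge $m$ and energy $d_0$, each summand of \eqref{La} replaces one factor $v_{i_s}$ by $av_{i_s}=\sum_{j>i_s}a_{ji_s}v_j$, producing a monomial whose energy exceeds $d_0$ by $j-i_s\geq 1$. Fixing a target energy $d$ forces the new index $j=i_s+(d-d_0)$ to depend only on $s$. The key is that the monomial has a stable tail: there exist $l_0,s_0$ with $i_l=l_0-l$ for $l\geq s_0$, so as soon as $s\geq s_0+(d-d_0)$ the forced index $j=l_0-(s-(d-d_0))$ equals $i_{s-(d-d_0)}$ already present in $\omega$, and the wedge collapses. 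Only finitely many positions contribute to the $F^{(m)}_d$-component of $a\cdot\omega$.

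For the group action the cleanest route is to exploit $\ov{GL}_\infty^+=\exp(\bar\gl_\infty^+)$: every $A\in\ov{GL}_\infty^+$ can be written as $A=\exp(a)$ for a unique $a\in\bar\gl_\infty^+$, so one formally defines $A\cdot\omega=\sum_{k\geq 0}a^k\omega/k!$. Because $a$ strictly raises energy by at least $1$, the $F^{(m)}_d$-component involves only the finitely many terms with $k\leq d-d_0$, and each $a^k\omega$ is well-defined in $\bar F$ by iterating the first step. One then checks that this definition agrees with the direct expansion of $\bigwedge_l Av_{i_l}$: in the expanded sum a contributing monomial is labelled by finitely many positions $k_1<\cdots<k_r$ at which a correction $j_{k_p}=i_{k_p}+e_p$ is taken with $\sum e_p=d-d_0$, and the tail-collision argument inductively bounds $k_p$ by $s_0+p(d-d_0)$, recovering finiteness directly.

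For an arbitrary element of $\bar F^{(m)}=\prod_d F^{(m)}_d$, linearity suffices: since neither operator lowers energy, the $F^{(m)}_d$-component of the output depends only on the finitely many $F^{(m)}_{d_0}$-components of the input with $d_0\leq d$, each a finite sum of monomials, each contributing finitely by the preceding steps. The action axioms $(AB)\omega=A(B\omega)$ and $[a,b]\omega=a(b\omega)-b(a\omega)$ are inherited from the principal-submatrix truncations $A^{(N)}\in GL_\infty$, $a^{(N)}\in\gl_\infty$, for which the statements are standard, by passing to the limit $N\to\infty$ componentwise in energy. The main obstacle is the quantitative finiteness for the group case, where allowing $A$ to have infinitely many nonzero entries would naively produce a formal sum with infinitely many choices at each position of the semi-infinite wedge; the exponential reduction to the Lie algebra makes this transparent, since a strictly upper-triangular $a$ raises energy by a positive integer and only finitely many of its powers contribute to any fixed energy sector.
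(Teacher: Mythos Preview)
The paper states this lemma without proof, so there is no argument to compare against. Your proposal is correct and supplies the omitted details: the key observations that an element of $\bar\gl_\infty^+$ raises energy by at least $1$, that the stable tail forces wedge collisions beyond a computable position, and that $\exp\colon \bar\gl_\infty^+ \to \ov{GL}_\infty^+$ is a bijection (each matrix entry of $\exp a$ or $\log A$ is a finite sum, since powers of a strictly triangular matrix have growing bands of zeros along the diagonal) are all sound. One minor remark: your inductive bound $k_p \le s_0 + p(d-d_0)$ is valid but loose; the same tail-collision argument gives the sharper $k_p \le s_0 + e_1 + \cdots + e_p - 1$, since if $k_p \ge s_0 + e_p$ the forced collision index $k_p - e_p$ must coincide with some $k_q$, $q<p$, whence $k_p \le k_{p-1} + e_p$.
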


\subsection{The $n$-component bosonization.}
From now on we fix $n\in\N$.
It will be convenient for us to relabel the vectors $v_i$ in the following
way: for $1\le j\le n$ and $k\in\Z+\frac{1}{2}$ we define
\[
v^{(j)}_k=v_{n(k-\frac{1}{2})+j-\frac{1}{2}}
\]
and the corresponding operators $\psi^{\pm (j)}_k$.
We note that
$$\psi^{\pm (j)}_{k}|0\rangle = 0\ \text{for}\ k > 0.$$
The relabelling above allows to introduce partial charges
\[
\text{charge}_j,\quad 1\le j\le n
\]
 in the following way:
$$\text{charge}_{j}(\psi^{\pm (i)}_{k})=\pm \delta_{ij},\
\text{charge}_{j}(|0\rangle) =0.
$$
Let
$$
F=\bigoplus_{k_1,\dots,k_n\in\Z} F^{(k_1,\dots,k_n)}
$$
be the decomposition with respect to the partial charges.
In particular, $F^{(m)}=\sum_{k_1+\dots+k_n=m} F^{(k_1,\dots,k_n)}.$
Similarly, the set of vectors $|m\ket$ is extended to the set
$|k_1,\dots, k_n\ket$, $k_i\in\Z$. In order to define these vectors
we need some additional operators
$Q_{i},\ i = 1,\ldots ,n$ on $F$.  These operators are uniquely defined by
the following conditions:
$$
Q_{i}|0\rangle = \psi^{+(i)}_{-\frac{1}{2}} |0\rangle ,\ Q_{i}\psi^{\pm
(j)}_{k} = (-1)^{\delta_{ij}+1} \psi^{\pm (j)}_{k\mp \delta_{ij}}Q_{i}.
$$
They satisfy the following commutation relations:
$$
Q_{i}Q_{j} = -Q_{j}Q_{i}\ \text{if}\ i \neq j.
$$
Note that $\langle 0|Q_{i}^{-1}=\langle 0|\psi^{-(i)}_{\frac{1}{2}}$.
We set
$$
|k_{1},\ldots ,k_{n}\rangle = Q^{k_{1}}_{1} \ldots
Q^{k_{n}}_{n}|0\rangle.
$$
For example,
$$
|\underbrace{0,\dots,0}_{i-1},1,0,\dots,0\ket=v^{(i)}_{\frac{1}{2}}\wedge
v^{(1)}_{-\frac{1}{2}}\wedge \dots v^{(n)}_{-\frac{1}{2}}\wedge v^{(1)}_{-\frac{3}{2}}\wedge\dots
$$
Obviously, $|k_1,\dots,k_n\ket\in F^{(k_1,\dots,k_n)}$.

It turned out that each space $F^{(k_1,\dots,k_n)}$ can be identified
with the space of polynomials in variables $x^{(i)}_k$, $1\le i\le n$, $n\in\Z$.
More precisely, each $F^{(k_1,\dots,k_n)}$ carries a structure of
a Fock module of the Heisenberg algebra based on $n$-dimensional space.
This identification is called the boson-fermion correspondence.
We describe it in details below.

Let us introduce the  fermionic fields
$$\psi^{\pm (j)}(z) = \sum_{k \in \Z+\frac{1}{2}}
\psi^{\pm (j)}_{k} z^{-k-\frac{1}{2}},
$$
which are formal generating functions of generators of the Clifford algebra.
Next we introduce bosonic fields $(1 \leq i,j \leq n)$:
$$
\alpha^{(ij)}(z) \equiv \sum_{k \in \Z} \alpha^{(ij)}_{k} z^{-k-1}
= :\psi^{+(i)}(z) \psi^{-(j)}(z):,
$$
where $:\ :$ stands for the {\it normal ordered product} defined in
the usual way $(\lambda ,\mu = +$ or $-$):
$$
:\psi^{\lambda (i)}_{k} \psi^{\mu (j)}_{\ell}: =
\begin{cases} \psi^{
\lambda (i)}_{k}
\psi^{\mu (j)}_{\ell}\ &\text{if}\ \ell > 0 \\
-\psi^{\mu (j)}_{\ell} \psi^{\lambda (i)}_{k} &\text{if}\ \ell < 0.
\end{cases}
$$
One can check that the operators
$\alpha^{(ij)}_{k}$ satisfy the commutation relations of the affine
algebra $\widehat{\gl}_{n}$ of the level $1$, i.e.:
$$[\alpha^{(ij)}_{p},\alpha^{(k\ell)}_{q}] =
\delta_{jk}\alpha^{(i\ell )}_{p+q}
- \delta_{i\ell} \alpha^{(kj)}_{p+q} + p\delta_{i\ell}
\delta_{jk}\delta_{p,-q},$$
and that
$$
\alpha^{(ij)}_{k}|m\rangle = 0 \ \text{if}\ k > 0 \ \text{or}\ k = 0\
\text{and}\ i < j.
$$
The operators $\alpha^{(i)}_{k} \equiv \alpha^{(ii)}_{k}$
satisfy the canonical commutation relation of the oscillator algebra
%
$$[\alpha^{(i)}_{k},\alpha^{(j)}_{\ell}] =
k\delta_{ij}\delta_{k,-\ell},$$
and one has
\begin{gather*}
\alpha^{(i)}_{\ell}|k_1,\dots,k_n\rangle = 0 \quad \text{for}\ \ell > 0, \\
\alpha^{(i)}_{0}|k_1,\dots,k_n\rangle =k_i|k_1,\dots,k_n\rangle,\quad i=1,\dots, n,
\end{gather*}
and $\langle 0|\alpha^{(i)}_{\ell}=0$ for $\ell\le 0$.
We also note that
$$
[\alpha^{(i)}_k,Q_j]=\delta_{ij}\delta_{k0}Q_j.
$$

One has the following vertex operator expression for  $\psi^{\pm
(i)}(z)$.
Given any sequence $s=(s_1,s_2,\dots)$, define
\[
\Gamma^{(j)}_\pm( s)=\exp \left(\sum_{k=1}^\infty s_k \alpha^{(j)}_{\pm
k}\right),
\]
then
\begin{thm}
\label{theorem}
$$
\begin{aligned}
\psi^{\pm (i)}(z) &= Q^{\pm 1}_{i}z^{\pm \alpha^{(i)}_{0}} \exp
(\mp \sum_{k < 0} \frac{1}{k} \alpha^{(i)}_{k}z^{-k})\exp(\mp
\sum_{k > 0} \frac{1}{k} \alpha^{(i)}_{k} z^{-k})\\[3mm]
&=Q^{\pm 1}_{i}z^{\pm \alpha^{(i)}_{0}}\Gamma_{-}^{(i)}(\pm [z])\Gamma_{+}^{(i)}(\mp
[z^{-1}]),
\end{aligned}
 $$
 where $[z]=(z,\frac{z^2}{2}, \frac{z^3}{3},\ldots)$.
\end{thm}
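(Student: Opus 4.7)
The plan is to prove the identity by showing that both sides, regarded as operator-valued formal series in $z$, agree on a set of vectors generating $F$ and satisfy the same commutation relations with the Heisenberg algebra. Denote the right-hand side by $R^{\pm(i)}(z)$.

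First, I would verify that both sides obey the same commutator with every bosonic generator:
\begin{equation*}
[\alpha^{(j)}_k,\psi^{\pm(i)}(z)]=\pm\delta_{ij}\,z^k\,\psi^{\pm(i)}(z),\qquad [\alpha^{(j)}_k,R^{\pm(i)}(z)]=\pm\delta_{ij}\,z^k\,R^{\pm(i)}(z).
\end{equation*}
For the fermionic field, this follows from $[\alpha^{(j)}_p,\psi^{\pm(i)}_\ell]=\pm\delta_{ij}\,\psi^{\pm(i)}_{p+\ell}$, which is a direct calculation from the normal-ordered definition of $\alpha^{(ij)}(z)$ and the Clifford relations. For $R^{\pm(i)}(z)$, the nonzero modes are handled by a Baker--Campbell--Hausdorff argument using $[\alpha^{(j)}_k,\alpha^{(i)}_{-m}]=k\delta_{ij}\delta_{k,m}$ on the exponentials $\Gamma_\pm^{(i)}$; the zero mode uses $[\alpha^{(j)}_0,Q_i^{\pm 1}]=\pm\delta_{ij}Q_i^{\pm 1}$, together with the fact that $z^{\pm\alpha^{(i)}_0}$ and the exponential factors commute with $\alpha^{(j)}_0$.

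Next, I would check agreement on each partial-charge vacuum $|k_1,\dots,k_n\rangle$. For the $+$ case on $|0\rangle$, the left-hand side reduces to $\sum_{m\geq 0}z^m\psi^{+(i)}_{-m-1/2}|0\rangle$ since $\psi^{+(i)}_\ell|0\rangle=0$ for $\ell>0$. For the right-hand side, using $\alpha^{(i)}_k|0\rangle=0$ for $k>0$ kills $\Gamma_+^{(i)}(-[z^{-1}])$, $z^{\alpha^{(i)}_0}|0\rangle=|0\rangle$, and $Q_i$ slides through $\Gamma_-^{(i)}([z])$ (since $[\alpha^{(i)}_{-k},Q_i]=0$ for $k>0$) to produce $\Gamma_-^{(i)}([z])\psi^{+(i)}_{-1/2}|0\rangle$ via $Q_i|0\rangle=\psi^{+(i)}_{-1/2}|0\rangle$. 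The required identity
\begin{equation*}
\Gamma_-^{(i)}([z])\,\psi^{+(i)}_{-1/2}|0\rangle=\sum_{m\geq 0}z^m\psi^{+(i)}_{-m-1/2}|0\rangle
\end{equation*}
is then proved by expanding $\Gamma_-^{(i)}([z])$ order by order and applying $[\alpha^{(i)}_{-k},\psi^{+(i)}_\ell]=\psi^{+(i)}_{\ell-k}$ inductively; at each order, the contributions involving $\alpha^{(i)}_{-k}|0\rangle\neq 0$ cancel against the mixed terms in the exponential expansion, leaving only the single-fermion series on the right. The $-$ case is symmetric, and the corresponding identities on a generic $|k_1,\dots,k_n\rangle$ are obtained analogously, now using the eigenvalue $z^{\pm\alpha^{(i)}_0}|k_1,\dots,k_n\rangle=z^{\pm k_i}|k_1,\dots,k_n\rangle$ and the $Q_iQ_j=-Q_jQ_i$ sign bookkeeping.

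Finally, I extend the agreement to all of $F$. The difference $X^{\pm(i)}(z):=\psi^{\pm(i)}(z)-R^{\pm(i)}(z)$ inherits the commutator $[\alpha^{(j)}_k,X^{\pm(i)}(z)]=\pm\delta_{ij}\,z^k\,X^{\pm(i)}(z)$ from Step~1, so if $X^{\pm(i)}(z)$ annihilates $|k_1,\dots,k_n\rangle$, then commuting Heisenberg creation operators past $X^{\pm(i)}(z)$ using this relation shows it annihilates the entire cyclic subspace $F^{(k_1,\dots,k_n)}$; combining over all charge sectors gives $X^{\pm(i)}(z)\equiv 0$. The main obstacle is the Schur-type identity in Step~2: all the other manipulations are formal and transparent, but the explicit matching of the fermionic modes $\psi^{+(i)}_{-m-1/2}|0\rangle$ against the bosonic expression $\Gamma_-^{(i)}([z])\psi^{+(i)}_{-1/2}|0\rangle$ is precisely the nontrivial content of the boson--fermion correspondence in this setting.
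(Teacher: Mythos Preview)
The paper does not actually prove this theorem: Section~1 is a review of the multi-component boson--fermion correspondence, and Theorem~\ref{theorem} is simply quoted from~\cite{KvdL,KvdL2} (and, ultimately, from the standard references~\cite{DJKM,JM,KR}). So there is no ``paper's own proof'' to compare against; your argument stands on its own.

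Your strategy is the standard one (matching Heisenberg commutators plus agreement on highest-weight vectors, then irreducibility), and Steps~1 and~3 are clean and correct. The only place that deserves tightening is Step~2. Your phrase ``the contributions involving $\alpha^{(i)}_{-k}|0\rangle\neq 0$ cancel against the mixed terms in the exponential expansion'' is true but opaque: at first order in $z$ there \emph{are} no mixed terms, and what actually happens is that $\psi^{+(i)}_{-1/2}\alpha^{(i)}_{-1}|0\rangle=0$ because $(\psi^{+(i)}_{-1/2})^2=0$; at higher orders the cancellation is genuine but increasingly intricate. A much cleaner route is to observe that both $\psi^{+(i)}(z)|0\rangle$ and $\Gamma_-^{(i)}([z])\,\psi^{+(i)}_{-1/2}|0\rangle$ lie in the irreducible Heisenberg Fock module $F^{(0,\dots,1,\dots,0)}$ and satisfy the coherent-state condition $\alpha^{(j)}_k v=\delta_{ij}z^k v$ for all $k>0$ (you already computed this in Step~1 and its vacuum analogue). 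Since such coherent states are unique up to scalar in an irreducible Fock module, the two sides are proportional, and comparing the $z^0$ coefficient fixes the constant to~$1$. This replaces the order-by-order combinatorics entirely and makes the ``nontrivial content'' you flagged transparent; it also generalizes painlessly to the other charge sectors without redoing the bookkeeping.
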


We note that
\begin{equation}\label{Gamma_vac}
\Gamma_+^{(j)}(s) \,|0\rangle
= |0\rangle  \,,\qquad
\langle 0| \, \Gamma_-^{(j)}(s)
= \langle 0|  \,.
\end{equation}
Let $\gamma(s,s')=e^{\sum n\, s_n s'_n}$.
For example,
\begin{equation}\label{e109}
\gamma(t,[z])= \exp\left(\sum_{n\ge 1} t_n\, z^n\right) \, .
\end{equation}
In what follows we will need certain properties of the functions $\Gamma_\pm^{(j)}$.
We collect them in the following proposition
\begin{prop}
We have the equalities:
\begin{equation}\label{Gamma_Gamma}
\Gamma_+^{(j)}(s)\, \Gamma_-^{(k)}(s') = \gamma(s,s')^{\delta_{jk}}\,
\Gamma_-^{(k)}(s')\, \Gamma_+^{(j)}(s) \, ,
\end{equation}
and
\begin{align}
\Gamma_\pm^{(j)}(s) \, \psi^{+(k)}(z)  &= \gamma(s,[z^{\pm 1}])^{\delta_{jk}} \,
\psi^{+(k)}(z) \, \Gamma_\pm^{(j)}(s) \, , \notag  \\
\Gamma_\pm^{(j)}(s) \, \psi^{-(k)}(z)  &= \gamma(s,-[z^{\pm 1}])^{\delta_{jk}} \,
\psi^{-(k)}(z) \, \Gamma_\pm^{(j)} (s)\, . \label{comgam}
\end{align}
\end{prop}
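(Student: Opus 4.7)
The plan is to derive both identities from the Heisenberg commutation relation $[\alpha^{(i)}_k,\alpha^{(j)}_\ell]=k\,\delta_{ij}\delta_{k,-\ell}$ stated earlier, together with the bracket between $\alpha^{(j)}_k$ and the fermionic field $\psi^{\pm(i)}(z)$, and then applying Baker--Campbell--Hausdorff in the simple central form $e^A e^B = e^{[A,B]} e^B e^A$ whenever $[A,B]$ is a scalar.

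For \eqref{Gamma_Gamma}, I would first compute
\[
\Bigl[\sum_{k\ge 1} s_k\alpha^{(j)}_k,\ \sum_{m\ge 1} s'_m\alpha^{(k)}_{-m}\Bigr]
= \sum_{k,m\ge 1} s_k s'_m\,[\alpha^{(j)}_k,\alpha^{(k)}_{-m}]
= \delta_{jk}\sum_{n\ge 1} n\,s_n s'_n.
\]
This commutator is a scalar (so in particular commutes with both exponents), which is exactly the condition under which $e^{A}e^{B}=e^{[A,B]}e^{B}e^{A}$. Recognising the right-hand side as $\log\gamma(s,s')^{\delta_{jk}}$ yields the commutation rule immediately.

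For \eqref{comgam}, the key input is the bracket
\[
[\alpha^{(j)}_k,\psi^{\pm(i)}_\ell] = \pm\,\delta_{ij}\,\psi^{\pm(i)}_{k+\ell},
\]
which follows directly from the definition $\alpha^{(ij)}(z)=:\!\psi^{+(i)}(z)\psi^{-(j)}(z)\!:$ and the Clifford relations. Packaging the $\psi$-modes into the field gives
\[
[\alpha^{(j)}_k,\psi^{\pm(i)}(z)] = \pm\,\delta_{ij}\,z^{k}\,\psi^{\pm(i)}(z),
\]
so $\psi^{\pm(i)}(z)$ is an eigenvector for the adjoint action of each individual $\alpha^{(j)}_{\pm k}$ with eigenvalue $\pm\delta_{ij}z^{\pm k}$. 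Since these adjoint actions for different $k$ commute on $\psi^{\pm(i)}(z)$, exponentiating termwise yields
\[
\Gamma^{(j)}_\pm(s)\,\psi^{+(i)}(z)\,\Gamma^{(j)}_\pm(s)^{-1}
= \exp\Bigl(\delta_{ij}\sum_{k\ge 1} s_k z^{\pm k}\Bigr)\,\psi^{+(i)}(z),
\]
and similarly with a minus sign for $\psi^{-(i)}(z)$. Matching the scalar factor against $\gamma(s,\pm[z^{\pm 1}])^{\delta_{ij}}$ via \eqref{e109} gives the stated formulas.

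No step here is genuinely delicate; the only point that deserves care is checking that the infinite sums appearing in $\Gamma^{(j)}_\pm(s)$ and in the OPE-like expansion of $\psi^{\pm(i)}(z)$ commute with taking brackets in the completed sense, i.e.\ that the formal series manipulations are legitimate on $\bar F$. This is standard for Heisenberg--Clifford vertex operators, but it is the one bookkeeping obstacle; once we restrict to matrix coefficients between vectors of fixed charge and energy, only finitely many terms contribute and the BCH step is rigorous.
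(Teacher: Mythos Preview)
Your argument is correct and is the standard derivation of these vertex-operator commutation rules. The paper itself does not prove this proposition: it is simply stated as a list of properties of the $\Gamma_\pm^{(j)}$, with the implicit understanding that these are well-known identities from the Heisenberg--Clifford formalism (as developed in the references \cite{KvdL,KvdL2}). Your write-up supplies exactly the expected proof.
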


To obtain the multicomponent KP hierarchy we give another description of $F$. For more information see \cite{KvdL,KvdL2}.
Let $\C[x]$ be the space of polynomials in
variables $x = \{ x^{(i)}_{k}\},\ k = 1,2,\ldots ,\ i = 1,2,\ldots
,n$.  Let $L$ be a lattice with a basis $\delta_{1},\ldots
,\delta_{n}$ over $\Z$ and the symmetric bilinear form
$(\delta_{i}|\delta_{j}) = \delta_{ij}$, where $\delta_{ij}$ is
the Kronecker symbol.  Let
$$
\varepsilon_{ij} =
\begin{cases} -1 &\text{if $i > j$} \\
1 &\text{if $i \leq j$.}
\end{cases}$$
Define a bi-multiplicative function $\varepsilon :\ L \times L \to
\{\pm 1 \}$ by letting
$$
\varepsilon (\delta_{i}, \delta_{j}) = \varepsilon_{ij}.
$$


Consider the vector space $\C[L]$ with basis $e^{\gamma}$,\
$\gamma \in L$, and the following twisted group algebra product:
$$e^{\alpha}e^{\beta} = \varepsilon (\alpha ,\beta)e^{\alpha +
\beta}. $$
Let $B = \C[x] \otimes_\C \C[L]$ be the tensor
product of algebras.  Then the $n$-component boson-fermion
correspondence is the vector space isomorphism
$\sigma :F \to B, $
given by
\begin{equation}
\label{bf}
\sigma (\alpha^{(i_{1})}_{-m_{1}} \ldots
\alpha^{(i_{s})}_{-m_{s}}|k_{1},\ldots ,k_{n}\rangle ) = m_{1} \ldots
m_{s}x^{(i_{1})}_{m_{1}} \ldots x^{(i_{s})}_{m_{s}} \otimes
e^{k_{1}\delta_{1} + \ldots + k_{n}\delta_{n}} .
\end{equation}
Note that this is also equal to
\[
\langle 0|Q_n^{-k_n}\cdots Q_1^{-k_1}
\prod_{j=1}^n \Gamma_+^{(j)}(x^{(j)})\alpha^{(i_{1})}_{-m_{1}} \ldots
\alpha^{(i_{s})}_{-m_{s}}|k_{1},\ldots ,k_{n}\rangle \otimes e^{k_{1}\delta_{1} + \ldots + k_{n}\delta_{n}}
\]


\section{Y.-P. Lee vector fields and the KP wave functions}

The goal of this section is to rewrite the formulas for the tangent action
of the Givental group restricted to genus $0$ in terms of wave functions of the multi-component KP hierarchy.

\subsection{The Givental group and the Y.-P. Lee vector fields.}

In \cite{G1,G2,G3,G4} (see also~\cite{FSZ,Lee2,LV,S} for further explanations) Givental introduced the action of the twisted loop group
on the space of formal Gromov-Witten theories. One of the possible ways of thinking about it is to say
that we consider an action of some special differential operators (depending on an element of twisted loop group) on
formal power series in variables $\hbar$ and $t_d^\mu, d\ge 0, \mu=1,\dots,n,$ of the type
\[
Z=\exp\left(\sum_{g\geq 0} \hbar^{g-1} F_g\right),
\]
where
\[
F_g=\sum_{k\ge 0} \sum_{\substack{d_1,\dots,d_k\ge 0 \\ 1\le \mu_1,\dots,\mu_k\le n}}
\bra \tau_{d_1}^{\mu_1}\dots \tau_{d_k}^{\mu_k}  \ket_g \frac{t_{d_1}^{\mu_1}\dots t_{d_k}^{\mu_k}}{k!},
\]
and the action is well-defined under some conditions imposed on $F_g$, $g\geq 0$.
Usually, $Z$ is called partition function, $F_g$ is called genus $g$ Gromov-Witten potential, and
the numbers $\bra \tau_{d_1}^{\mu_1}\dots \tau_{d_k}^{\mu_k}  \ket_g$ are called
correlators.

Let $G_\pm$ be positive and negative twisted loop groups of $GL_n$,
i.~e.,
\begin{gather*}
G_+=\{R(\zeta)=\Id + \sum_{i>0} R_i \zeta^{i}:\ R(-\zeta)^t R(\zeta)=\Id\},\\
G_-=\{S(\zeta)=\Id + \sum_{i>0} S_i \zeta^{-i}:\ S(-\zeta)^t S(\zeta)=\Id\}.
\end{gather*}
The corresponding Lie algebras are defined as follows:
\begin{gather*}
\g_+=\{r(\zeta)=\sum_{i>0} r_i \zeta^{i}:\ r(-\zeta)^t + r(\zeta)=0\},\\
\g_-=\{s(\zeta)=\sum_{i>0} s_i \zeta^{-i}:\ s(-\zeta)^t + s(\zeta)=0\}.
\end{gather*}
Givental defined actions of $G_\pm$ on the space of partition functions.
His definition goes through the quantization of certain Hamiltonians.
In \cite{Lee1,Lee2} Y.-P.~Lee computed the vector fields that correspond
to the Lie algebras $\g_\pm$ in Givental's action.

In order to present the Y.-P.~Lee formulas we need some further notations.
Let $(a)^\mu_\nu$ be the entries of the matrix $a$. We set
$(a)_{\mu\nu}=(a)^{\nu\mu}=(a)^\mu_\nu$, and we also use the notations
$(a)_\un^\mu=\sum_\nu (a)^\mu_\nu$, $(a)_\un^\un=\sum_{\mu,\nu} (a)^\mu_\nu$.

Let $s=\sum_{l\ge 1} s_l \zeta^{-l}$ be some element from $\g_-$. Then the corresponding
vector field $\widehat s$ is given by
the first order differential operator
\begin{multline}\label{eq:hat-s}
\widehat{s} = -\frac1{2\hbar} (s_3)_{\un,\un} - \sum_\mu (s_1)_{\un}^\mu \frac{\pa}{\pa t_0^\mu}
+ \frac1\hbar \sum_{d,\mu} (s_{d+2})_{\un,\mu} \, t_d^\mu\\
+ \sum_{
\substack{d,l\\ \mu, \nu}
}
(s_l)_\nu^\mu \, t_{d+l}^\nu \frac{\pa}{\pa t_d^\mu}
+ \frac1{2 \hbar} \sum_{
\substack{d_1,d_2 \\ \mu_2,\mu_2}
}
(-1)^{d_1} (s_{d_1+d_2+1})_{\mu_1,\mu_2} \, t_{d_1}^{\mu_1} t_{d_2}^{\mu_2}.
\end{multline}

Let $r=\sum_{l\ge 1} r_l \zeta^{l}$ be some element from $\g_+$. Then the corresponding
vector field $\widehat r$ is given by the second order differential operator:
\begin{multline}\label{eq:hat-r}
\widehat{r} =
- \sum_{
\substack{l \geq 1 \\ \mu}
}
(r_l)_{\un}^\mu \frac{\pa}{\pa t_{l+1}^\mu}\\
+\sum_{
\substack{d \geq 0, l \geq 1\\ \mu,\nu}
} \!\!
(r_l)_\nu^\mu \, t_d^\nu \frac{\pa}{\pa t_{d+l}^\mu}
+\frac{\hbar}2 \sum_{
\substack{d_1,d_2 \geq 0 \\ \mu_1,\mu_2}
} \!
(-1)^{d_1+1} (r_{d_1+d_2+1})^{\mu_1 \mu_2}
\frac{\pa^2}{\pa t_{d_1}^{\mu_1} \pa t_{d_2}^{\mu_2}}.
\end{multline}

\begin{rem}
Our conventions are a bit different from the standard ones in Givental's theory. That is,
throughout the paper we fix the unit $\un$ of the Frobenius manifold to be the sum of
the flat coordinates $t_0^\mu$ and the metric  to be the identity matrix. This reflects the formulas
for $\widehat s$ and $\widehat r$ above. The reason for these replacements will be clear after we discuss the van de
Leur approach to Frobenius structures via the $n$-component KP wave functions.
\end{rem}


\subsection{Genus $0$ reduction of Y.-P.~Lee formulas}

In this paper we are only interested in the genus zero part of the formal Gromov-Witten theories.
Following Givental, we call the genus zero part $F_0$ a formal Frobenius structure.
(To be more precise, a formal Frobenius structure is a solution of the following
system of PDEs: the string equation and the topological
recursion relations, see \cite{G3}).
Let $F$ be the series obtained from $F_0$ by putting
$t_d^\mu=0$ for $d>0$. Then $F$ defines the family of algebras (depending on
the point $t_0$) via the structure constants:
\[
c_{\mu_1,\mu_2}^{\mu_3}=\sum_\nu \frac{\pa^3 F}{\pa t_0^{\mu_1}\pa t_0^{\mu_2}\pa t_0^{\nu}}\eta^{\nu,\mu_3},
\]
where $\eta$ is a non-degenerate symmetric bilinear form. The
associativity of these algebras is equivalent to the celebrated
WDVV equation (see \cite{DVV,D1,D,W2}). In \cite{G3} Givental
defined the action of the twisted loop group on the space of
Frobenius structures. The most convenient way to describe this
action is via the genus zero restriction of the Y.-P.~Lee formulas given above.

Recall the function
\[
F(t_0^1,\dots, t_0^n)=\sum_{k\ge 0} \sum_{\mu_1,\dots,\mu_k}
\bra \tau_0^{\mu_1}\dots \tau_0^{\mu_k}  \ket_0 \frac{t_0^{\mu_1}\dots t_0^{\mu_k}}{k!}.
\]
We are now going  to extract the restriction of the Y.-P. Lee vector fields to $F$.
More precisely, let $Z=\sum_{g\ge 0} \hbar^{g-1} F_g$ be some partition function
and $g(\ve)$ be a family of elements of $G_\pm$, $g(0)=\Id$. Then
\[
g(\ve) Z=\exp(\sum_{g\ge 0} \hbar^{g-1} F_g(\ve)).
\]
Assume that the  tangent vector to $g(\ve)$ at $\ve=0$ is equal to $s$ (respectively, $r$).
Our goal is to express $\frac{\pa F(\ve)}{\pa\ve}$ at $\ve=0$ in terms of
$Z$. We denote these derivatives by $s.F$ (respectively, $r.F$).

We will need one more piece of notations. Namely, let
\begin{align}
\theta^{(0)}_\mu(t^1,\dots,t^n) & =1, \\
\label{t}
\theta^{(1)}_\mu(t^1,\dots,t^n) & =t^\mu,
\end{align}
and for $d\ge 2$ let
\begin{equation}\label{theta}
\theta_\mu^{(d)}(t^1_0,\dots,t^n_0) =\left. \frac{\pa F_0}{\partial t^{\mu}_{d-2}}\right |_{t_k^\nu=0, k>0}
  = \sum_{k\ge 0} \sum_{\mu_1,\dots,\mu_k}
\bra \tau_{d-2}^\mu \tau_0^{\mu_1}\dots \tau_0^{\mu_k}  \ket_0 \frac{t_0^{\mu_1}\dots t_0^{\mu_k}}{k!}.
\end{equation}
We also use the vector notation
\begin{equation}
\theta^{(d)}=(\theta_1^{(d)},\dots \theta_n^{(d)}).
\end{equation}
Say, $\theta^{(0)}=(1,1,\dots,1)$.

\begin{prop}\label{s.F}
Let $s = \sum_{l \geq 1} s_l \zeta^{-l}$. Then
\[
s.F=-\frac{1}{2}\theta^{(0)}s_3\theta^{(0)t}
+ \theta^{(0)} s_2 \theta^{(1)t}
-\theta^{(0)} s_1 \theta^{(2)t}
+
\frac{1}{2}\theta^{(1)} s_1\theta^{(1)t},
\]
where the upper index $t$ denotes the transposed vector or matrix.
\end{prop}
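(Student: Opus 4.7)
The plan is to unwind the definition of $s.F$ directly. Since $\hat s$ is a derivation of $Z$ along the family $g(\varepsilon)$, one has $\hat s Z / Z = \sum_{g\geq 0}\hbar^{g-1} (s.F_g)$, so $s.F_0$ is the coefficient of $\hbar^{-1}$ in $\hat s Z/Z$, and $s.F$ is obtained from this by setting $t_d^\mu=0$ for $d>0$. I would then process the five terms of \eqref{eq:hat-s} in order, applying each one to $Z$, dividing by $Z$, extracting $\hbar^{-1}$, and imposing the restriction.

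A quick inspection shows that most of the dust settles: the $-\sum_\mu (s_1)_\un^\mu \partial/\partial t_0^\mu$ term produces $-\sum_\mu (s_1)_\un^\mu \, \partial F_0/\partial t_0^\mu$, which under the restriction becomes $-\sum_\mu (s_1)_\un^\mu \theta^{(2)}_\mu$ by the definition \eqref{theta} (taking $d-2=0$); the $t_{d+l}^\nu \partial/\partial t_d^\mu$ term is killed by the restriction because $d+l\geq 1$ forces $t_{d+l}^\nu=0$; the scalar term $-\frac{1}{2\hbar}(s_3)_{\un\un}$ contributes $-\tfrac{1}{2}(s_3)_{\un\un}$; the linear-in-$t$ term $\frac1\hbar\sum (s_{d+2})_{\un,\mu} t_d^\mu$ survives only at $d=0$, yielding $\sum_\mu (s_2)_{\un,\mu} t_0^\mu$; and the quadratic term survives only at $d_1=d_2=0$, yielding $\tfrac12 \sum_{\mu_1,\mu_2} (s_1)_{\mu_1,\mu_2} t_0^{\mu_1} t_0^{\mu_2}$.

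To match these four surviving contributions with the right-hand side of the proposition, I would translate them to the $\theta^{(d)}$-vector notation: $\theta^{(0)}=(1,\dots,1)$ turns $\un$-sums into row-vector multiplications, $\theta^{(1)}=(t^1,\dots,t^n)$ turns the linear and quadratic pieces into $\theta^{(0)} s_2 \theta^{(1)t}$ and $\tfrac12 \theta^{(1)} s_1 \theta^{(1)t}$, and the derivative term becomes $-\theta^{(0)} s_1 \theta^{(2)t}$. Summing reproduces the claimed formula.

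The main bookkeeping obstacle is the index-juggling: the conventions $(a)_{\mu\nu}=(a)^{\nu\mu}=(a)^\mu_\nu$ rely on the identity metric (cf.\ the Remark), and identifying $-\sum_\mu(s_1)_\un^\mu\theta^{(2)}_\mu$ with $-\theta^{(0)} s_1 \theta^{(2)t}$ requires the symmetry $s_1^t=s_1$. This symmetry comes from the twisted loop condition $s(-\zeta)^t+s(\zeta)=0$, which yields $s_l^t=(-1)^{l+1} s_l$; thus $s_1$ and $s_3$ are symmetric while $s_2$ is antisymmetric, and these are exactly the symmetries needed to convert the raw index sums into the transpose-vector form of the proposition. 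Once these conventions are fixed, the verification is mechanical.
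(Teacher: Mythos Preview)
Your proposal is correct and follows essentially the same approach as the paper: restrict~\eqref{eq:hat-s} to the terms surviving when $t_d^\mu=0$ for $d>0$, then rewrite each of the four surviving contributions in the $\theta^{(d)}$-vector notation. Your explicit remark that the twisted-loop symmetry $s_l^t=(-1)^{l+1}s_l$ is what justifies the index matching (e.g.\ $-\sum_\mu(s_1)_\un^\mu\theta^{(2)}_\mu = -\theta^{(0)} s_1 \theta^{(2)t}$) is a point the paper leaves implicit, but otherwise the arguments are the same.
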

\begin{proof}
Since we are interested only in coefficients of monomials with $t_0^\mu$, formula $\eqref{eq:hat-s}$ turns into
$$
\widehat{s} = -\frac1{2\hbar} (s_3)_{\un,\un} - \sum_\mu (s_1)_{\un}^\mu \frac{\pa}{\pa t_0^\mu}
+ \frac1\hbar \sum_{\mu} (s_{2})_{\un,\mu} \, t_0^\mu
+ \frac1{2 \hbar} \sum_{\mu_2,\mu_2}
(s_{1})_{\mu_1,\mu_2} \, t_{0}^{\mu_1} t_{0}^{\mu_2}.
$$
The derivative of the the constant term is equal to $-\frac{1}{2}(s_3)_{\un,\un}$. Since $\un$ is the sum of coordinates in our case, we replace that by
$-\frac{1}{2} (1,\dots,1) s_3 (1,\dots,1)^t$.

Look at the $s_2$-term of this formula. It affects only degree $1$ terms in flat coordinates adding to them
$(s_{2})_{\un,i} \, t_i = (1,\dots,1) s_2 \theta^{(1)t}$.

Now look at the $s_1$-part of the above formula. The derivative
$- \sum_\mu (s_1)_\un^\mu \frac{\pa}{\pa t_0^\mu}$
just adds
$-(1,\dots,1) s_1 \theta^{(2)t}$ to $F$. The term
$\frac1{2 \hbar} \sum_{
 \mu_2,\mu_2}
(s_{1})_{\mu_1,\mu_2} \, t_{0}^{\mu_1} t_{0}^{\mu_2}$ can be written in terms of $\theta$-s as $
\frac{1}{2} \theta^{(1)} s_1 \theta^{(1)t}$.
The Proposition follows.
\end{proof}

\begin{prop}\label{r.F}
Let $r = \sum_{l\geq 1}r_l \zeta^{l}$. Then $r.F$ is equal to
\[
\sum_{l=1}^\infty\left(
-\theta^{(l+3)} r_l \theta^{(0)t} + \theta^{(l+2)} r_l \theta^{(1)t} +\frac{1}{2}
\sum_{m+m'=l-1} (-1)^{m'+1} \theta^{(m+2)} r_l\theta^{(m'+2)^t}\right).
\]
\end{prop}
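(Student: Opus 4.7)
The plan is to mimic the structure of the proof of Proposition~\ref{s.F}: decompose $\widehat{r}$ according to the three summands in \eqref{eq:hat-r}, extract from each the $\hbar^{-1}$ coefficient of $\widehat{r} Z/Z$, and finally restrict to $t_d^\mu = 0$ for $d > 0$, using the definition \eqref{theta} to identify the surviving first derivatives of $F_0$ with the functions $\theta^{(d)}_\mu$. Write $\widehat{r} = \widehat{r}_1 + \widehat{r}_2 + \widehat{r}_3$ accordingly.

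The first two summands $\widehat{r}_1$ and $\widehat{r}_2$ are first-order differential operators carrying no factor of $\hbar$, so $\widehat{r}_j Z / Z = \widehat{r}_j \sum_{g} \hbar^{g-1} F_g$ for $j = 1, 2$ and the relevant genus-zero contribution is simply $\widehat{r}_j F_0 \bigr|_{t_d^\mu = 0,\, d > 0}$. In $\widehat{r}_1 F_0$ every derivative $\partial F_0/\partial t_{l+1}^\mu$ restricts to $\theta_\mu^{(l+3)}$, and contracting the index $\un$ against the column of ones $\theta^{(0)t}$ produces the first term $-\sum_{l} \theta^{(l+3)} r_l \theta^{(0)t}$. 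In $\widehat{r}_2 F_0$, after the restriction only the $d = 0$ summand survives, because all higher $t_d^\nu$ vanish; using $\partial F_0/\partial t_l^\mu \bigr|_{\mathrm{res}} = \theta_\mu^{(l+2)}$ and reading the result as a row$\,\times\,$matrix$\,\times\,$column product yields the second term $\sum_{l} \theta^{(l+2)} r_l \theta^{(1)t}$.

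The delicate point, as expected, is $\widehat{r}_3$, which is of second order and carries an explicit $\hbar$ in front. Applying it to $Z = \exp(\sum_g \hbar^{g-1} F_g)$ and using
\[
\frac{1}{Z}\frac{\pa^2 Z}{\pa t_{d_1}^{\mu_1}\pa t_{d_2}^{\mu_2}}
= \frac{\pa^2}{\pa t_{d_1}^{\mu_1}\pa t_{d_2}^{\mu_2}}\Bigl(\sum_{g}\hbar^{g-1}F_g\Bigr)
+ \Bigl(\frac{\pa}{\pa t_{d_1}^{\mu_1}}\sum_{g}\hbar^{g-1}F_g\Bigr)\Bigl(\frac{\pa}{\pa t_{d_2}^{\mu_2}}\sum_{g}\hbar^{g-1}F_g\Bigr),
\]
the overall factor $\hbar$ shifts every term one power up, so the coefficient of $\hbar^{-1}$ comes exclusively from the $\hbar^{-2}$ part of the bracket, i.e.\ from $(\pa F_0/\pa t_{d_1}^{\mu_1})(\pa F_0/\pa t_{d_2}^{\mu_2})$. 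The pure second-derivative term on any single $F_g$ contributes only at order $\hbar^0$ or higher and is discarded. After restriction, the two first derivatives become $\theta_{\mu_1}^{(d_1+2)}$ and $\theta_{\mu_2}^{(d_2+2)}$, producing
\[
\frac12\sum_{d_1,d_2\ge 0}\sum_{\mu_1,\mu_2}(-1)^{d_1+1}(r_{d_1+d_2+1})^{\mu_1\mu_2}\,\theta_{\mu_1}^{(d_1+2)}\theta_{\mu_2}^{(d_2+2)}.
\]
Reindex by $l = d_1+d_2+1$, $m' = d_1$, $m = d_2$, and rewrite $(r_l)^{\mu_1\mu_2} = (r_l)^{\mu_2}_{\mu_1}$ as the $(\mu_2,\mu_1)$-entry of $r_l$, so that the inner contraction becomes $\theta^{(m+2)} r_l \theta^{(m'+2)t}$; this is precisely the third term of the claimed formula.

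The main obstacle is not conceptual but notational: one has to track the $\hbar$ grading in the $\widehat{r}_3$ step carefully (to see that only the $(\pa F_0)(\pa F_0)$-cross-term survives) and keep the conventions $(a)_{\mu\nu}=(a)^\mu_\nu$ and $(a)^{\mu\nu}=(a)^\nu_\mu$ consistent so that the matrix/transpose notation on the right-hand side comes out correctly, especially in the symmetric third sum. Summing the contributions of $\widehat{r}_1$, $\widehat{r}_2$, $\widehat{r}_3$ then gives exactly the asserted expression for $r.F$.
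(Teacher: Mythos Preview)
Your proof is correct and follows essentially the same route as the paper. The only cosmetic difference is that the paper first records the action of $\widehat r$ on individual genus-$0$ correlators (the formula with the sum over partitions $I\sqcup J$, quoted from~\cite{Lee2,FSZ}) and then assembles these into generating functions, whereas you work directly with $\widehat r Z/Z$ at the level of $F_0$; the $I\sqcup J$ sum is precisely the correlator expansion of your cross-term $(\pa F_0)(\pa F_0)$, so the two computations are the same. For the third summand the paper reaches the final index placement by invoking the twisted condition $(r_l)_{\nu\mu}=(-1)^{l-1}(r_l)_{\mu\nu}$, while your direct reindexing $m'=d_1$, $m=d_2$ together with $(r_l)^{\mu_1\mu_2}=(r_l)^{\mu_2}_{\mu_1}$ achieves the same result without it.
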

\begin{proof}
Formula~\eqref{eq:hat-r} implies the following expression for the derivative of a particular coefficient (see~\cite{Lee2,FSZ}):
\begin{multline*}
r. \bra \tau_{d_1}^{\mu_1} \dots \tau_{d_n}^{\mu_n} \ket_0  = \\
- \sum_{l=1}^{\infty}
(r_l)_{\mu,\un}\bra \tau_{l+1}^\mu\tau_{d_1}^{\mu_1} \dots \tau_{d_n}^{\mu_n}\ket_0
 + \sum_{l=1}^{\infty} \sum_{i=1}^n
(r_l)_{\nu,\mu_i}
\bra
\tau_{d_1}^{\mu_1} \dots \tau_{d_i+l}^{\nu} \dots \tau_{d_n}^{\mu_n}
\ket_0\\
+\frac12 \sum_{l=1}^{\infty} \sum_{m+m'=l-1} (-1)^{m+1}
\sum_{I \sqcup J = \{1, \dots, n \}}
(r_l)^{\mu\nu}
\bra\tau_{m}^{\mu} \prod_{i \in I} \tau_{d_i}^{\mu_i}
\ket_0
\bra\tau_{m'}^{\nu} \prod_{i \in J} \tau_{d_i}^{\mu_i}
\ket_0.
\end{multline*}

We are interested only in correlators with $d_i=0$ for all $i$. Then the terms of this formula are collected into the derivative
of $F$ as follows. The generating function of the terms
$- \sum_{l=1}^{\infty} (r_l)_{\mu,\un}\bra \tau_{l+1}^\mu\tau_{0}^{\mu_1} \dots \tau_{0}^{\mu_n}\ket_0 $ in our language turns into
$-\sum_{l=1}^\infty \theta^{(l+3)} r_l (1 \dots 1)^t$. The generating function of the terms
$\sum_{l=1}^{\infty} \sum_{i=1}^n
(r_l)_{\nu,\mu_i}
\bra
\tau_{0}^{\mu_1} \dots \tau_{l}^{\nu} \dots \tau_{0}^{\mu_n}
\ket_0
$
turns into
$
\sum_{l=1}^\infty \sum_{\mu,\nu} \theta^{(l+2)}_\nu (r_l)_{\nu,\mu} t_\mu.
$
Finally, the term
$$
\frac12 \sum_{l=1}^{\infty} \sum_{m+m'=l-1} (-1)^{m+1}
\sum_{I \sqcup J = \{1, \dots, n \}}
(r_l)^{\mu\nu}
\bra\tau_{m}^{\mu} \prod_{i \in I} \tau_{0}^{\mu_i}
\ket_0
\bra\tau_{m'}^{\nu} \prod_{i \in J} \tau_{0}^{\mu_i}
\ket_0
$$
turns into
$
\frac{1}{2} \sum_{l=1}^\infty \sum_{m+m'=l-1} (-1)^{m'+1}
\theta^{(m+2)} r_l \theta^{(m'+2)t}.
$
Here we use that $(r_l)^{\mu\nu}=(r_l)_{\nu\mu}=(-1)^{l-1}(r_l)_{\mu\nu}$
\end{proof}

\subsection{$n$-KP hierarchies and Frobenius manifolds.}
In this subsection we review main definitions and results from \cite{vdL}.

We first recall the definition of the tau-functions.
Let $\bra 0|\in F^*$ be the left vacuum.
The $n$-component
tau-functions $\tau_\alpha(x)$ attached to some operator $A$ are defined as follows
\be
\tau(x)=\sum_{\alpha\in L} \tau_\alpha(x) e^{\alpha}= \sigma(A|0\ket),
\en
where $\sigma$ is defined by \eqref{bf}.
One can show that
\be
\tau_{k_1\delta_1+\dots + k_n\delta_n}(x)=\
\bra 0|Q_n^{-k_n}\dots Q_1^{-k_1} \prod_{i=1}^n \Gamma^{(i)}_+(x^{(i)}) A|0\ket.
\en
We note that if $A\in GL_\infty$ then the tau functions are polynomials
in variables $x^{(i)}_k$
and $\tau_{k_1\delta_1+\dots + k_n\delta_n}(x)=0$ if $k_1+\dots + k_n\ne 0$.
However, for $A\in\ov{GL}_\infty^+$ the tau-functions are infinite
power series (see the end of Section~\ref{semi}).

The wave function for $A=A(t)$ is equal to:
\[
\Phi_{ik}^\pm(A,x,z)=\sum_{\ell\in\Z}\Phi_{ik}^\pm(A,x)_\ell z^\ell=
\frac{\langle 0|\Gamma_+(x) Q_i^{\mp 1}\psi^{\pm(k)}(z)A|0\rangle}
{\langle 0|\Gamma_+(x) A|0\rangle}\, ,
\]
where
\[
\Gamma_+(x)=
\prod_{i=1}^n \Gamma_+^{(i)}(x^{(i)})\, .
\]
(We note that we are actually using the special type of wave functions from \cite{KvdL}  with
the parameter $\alpha=0$).
Note that the denominator $\langle 0|\Gamma_+(x) A|0\rangle$ is the tau-function
$\tau_0$ attached to $A$ and the numerator is obtained from
$\tau_{\pm(\delta_i-\delta_k)}$ by applying the bosonic (vertex operator) form of the
series $\psi^{\pm (k)} (z)$.
We put
\[
\Psi_{ik}^\pm(A,x,z)=\sum_{\ell=0}^\infty \Psi_{ik}^\pm(A,x)_\ell z^\ell=
\frac{\langle 0|\Gamma_+(x) Q_i^{\mp 1}A\psi^{\pm(k)}(z)|0\rangle}
{\langle 0|\Gamma_+(x) A|0\rangle}\,
\]
and the corresponding matrix
\[
\Psi^\pm(A,x,z)=
\Phi^\pm(A,x,z)A(z)\, .
\]
In what follows we sometimes drop the element $A$ or variables $x$ (or both)
and denote the corresponding functions $\Psi(A,x,z):=\Psi^+(A,x,z)$ simply by $\Psi(x,z)$ (or
simply by $\Psi$).

The following property of $\Psi$ (see \cite{vdL}) is crucial for us.
\begin{prop}
If $A(t)A(-t)^t=\Id$, then
\[
\Psi(A,x,z)\Psi(A,x,-z)^t=\Id
\]
after substitution of $x_{2k}^{(i)}=0$ for all $k=1,2,\ldots$, $1\le i\le n$.
\end{prop}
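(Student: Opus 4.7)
My strategy is to reduce the statement to a ``bare'' identity for the wave function $\Phi$, and then to verify that identity using the vertex operator realization of $\psi^{+(l)}(\pm z)$ together with the special structure at $x_{2k}^{(i)} = 0$.

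First, the factorization $\Psi^\pm(A,x,z) = \Phi^\pm(A,x,z) A(z)$ and the twisted loop group condition $A(t) A(-t)^t = \Id$ --- which is equivalent to $A(z) A(-z)^t = \Id$ --- give, by direct matrix algebra,
\[
\Psi(A,x,z) \Psi(A,x,-z)^t = \Phi(A,x,z) \bigl[ A(z) A(-z)^t \bigr] \Phi(A,x,-z)^t = \Phi(A,x,z) \Phi(A,x,-z)^t.
\]
Thus the proposition is equivalent to showing that $\Phi(A,x,z) \Phi(A,x,-z)^t|_{x_{2k}^{(i)}=0} = \Id$. Note that $\Phi$ still depends on $A$, so the twisted loop condition continues to play a role in the remaining step.

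Second, I would expand an $(i,j)$-entry as
\[
(\Phi \Phi^t(-z))_{ij} = \frac{1}{\tau_0(x)^2} \sum_{l=1}^n \langle 0 | \Gamma_+(x) Q_i^{-1} \psi^{+(l)}(z) A | 0 \rangle \, \langle 0 | \Gamma_+(x) Q_j^{-1} \psi^{+(l)}(-z) A | 0 \rangle,
\]
substitute the bosonic form of each $\psi^{+(l)}(\pm z)$ from Theorem~\ref{theorem}, and commute the resulting $\Gamma_-^{(l)}([\pm z])$ factors through $\Gamma_+(x)$ using \eqref{Gamma_Gamma}. Each commutation produces a factor $\gamma(x^{(l)}, [\pm z])$, and the product from the pair $(z,-z)$ is
\[
\gamma(x^{(l)}, [z])\, \gamma(x^{(l)}, [-z]) = \exp\Bigl( 2 \sum_{k \geq 1} x^{(l)}_{2k} z^{2k} \Bigr),
\]
which collapses to $1$ precisely at $x_{2k}^{(l)} = 0$. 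Using the twisted loop condition $A^t(-z) A(z) = \Id$ to identify $\psi^{+(l)}(-z) A|0\rangle$ with the application of $\psi^{-(l)}(z)$ to $A|0\rangle$ (modulo corrections killed by the same substitution), the sum over $l$ takes the form of the fermionic Casimir $\sum_{l,k} \psi^{+(l)}_k \otimes \psi^{-(l)}_{-k}$ applied to $A|0\rangle \otimes A|0\rangle$. The bilinear identity $\sum_{l,k} \psi^{+(l)}_k \tau \otimes \psi^{-(l)}_{-k} \tau = 0$ for $\tau = A|0\rangle$, together with the charge constraints imposed by $Q_i^{-1}$ and $Q_j^{-1}$, then forces the total to be $\delta_{ij} \tau_0(x)^2$, giving the required orthogonality.

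The main technical obstacle will be the sign and shift bookkeeping: one must carefully handle contributions from $(-z)^{\alpha_0^{(l)}} = (-1)^{\alpha_0^{(l)}} z^{\alpha_0^{(l)}}$, from the anticommutator relations $Q_i \psi^{\pm(j)}_k = (-1)^{\delta_{ij}+1} \psi^{\pm(j)}_{k\mp \delta_{ij}} Q_i$, and from the asymmetry $[-z^{\pm 1}] \neq -[z^{\pm 1}]$ in the arguments of the $\Gamma_\pm^{(l)}$ factors. The substitution $x_{2k}^{(i)} = 0$ is precisely what annihilates the ``even-frequency'' combinations $[z^{\pm 1}] + [-z^{\pm 1}]$ upon pairing with $x$, so that these sign and shift discrepancies cancel cleanly and produce the proposition as stated.
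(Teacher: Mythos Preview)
The paper does not give a proof of this proposition; it simply cites \cite{vdL}. So there is no in-paper argument to compare against, and your proposal must be judged on its own merits.

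Your first reduction, $\Psi(z)\Psi(-z)^t=\Phi(z)A(z)A(-z)^t\Phi(-z)^t=\Phi(z)\Phi(-z)^t$, is correct and is indeed the natural first step. The remaining argument, however, has two genuine gaps.

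\emph{First gap: the ``identification''.} You write that the twisted loop condition lets you identify $\psi^{+(l)}(-z)A|0\rangle$ with the action of $\psi^{-(l)}(z)$ on $A|0\rangle$. These two vectors live in different charge sectors of $F$ (one has $\mathrm{charge}_l=+1$, the other $-1$), so no literal identification is possible. What is actually true, and what the argument in \cite{vdL} uses, is an equality of \emph{matrix elements}: under $A(t)A(-t)^t=\Id$ and $x_{2k}^{(i)}=0$ one has $\Psi^-(A,x,z)=\Psi^+(A,x,-z)^t$ (this is also quoted later in the present paper). Proving that equality is exactly the nontrivial content; it requires tracking simultaneously the automorphism of the Clifford algebra sending $\psi^{+(l)}(z)\mapsto\psi^{-(l)}(-z)$, its effect on $Q_i^{\mp1}$, the invariance of $A|0\rangle$ under this automorphism (which is where $A(t)A(-t)^t=\Id$ enters), and the invariance of $\Gamma_+(x)$ (which is where $x_{2k}^{(i)}=0$ enters). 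Your proposal gestures at the last of these but does not supply the mechanism for the others.

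\emph{Second gap: the bilinear identity is a residue, not a pointwise statement.} Even granting the identification, you end with the fermionic Casimir $\sum_{l,k}\psi^{+(l)}_k\otimes\psi^{-(l)}_{-k}$ applied to $A|0\rangle\otimes A|0\rangle$. But that Casimir is $\mathrm{Res}_z\sum_l\psi^{+(l)}(z)\otimes\psi^{-(l)}(z)$, i.e.\ it picks out a single coefficient in $z$; the bilinear identity says this residue vanishes. You need the much stronger statement that $\sum_l\Phi^+_{il}(x,z)\Phi^-_{jl}(x,z)=\delta_{ij}$ as a full power series in $z$. This does hold (the paper itself uses $\Phi^+(A,x,z)=\Phi^-(A,x,z)^{t\,-1}$ in the proof of Proposition~\ref{centraal}), but it is not a consequence of the bilinear identity alone: one also needs the specific asymptotic form $\Phi^\pm=(I+O(z^{-1}))\cdot(\text{diagonal exponential in }x,z)$, or equivalently the pseudo\-differential dressing-operator factorization $W^+(W^-)^*=1$ from \cite{KvdL}. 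Your proposal does not invoke this ingredient, and the ``charge constraints imposed by $Q_i^{-1}$ and $Q_j^{-1}$'' cannot substitute for it.

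A clean route is: (i) quote the general $n$-KP fact $\Phi^+(A,x,z)\Phi^-(A,x,z)^t=\Id$ from \cite{KvdL}; (ii) prove $\Phi^-(A,x,z)=\Phi^+(A,x,-z)$ (entrywise, after the substitution and under the twisted loop hypothesis) via the Clifford automorphism above; (iii) combine with your first reduction.
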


We keep the assumption $A(t)A(-t)^t=\Id$ for $A$ throughout the rest of this section.

We now define the KP version of the functions $\theta^{(k)}_i$ (see \eqref{t}, \eqref{theta}).
Slightly abusing notations, we denote these KP $\theta$ functions by the same symbols.
The identification will be explained below.
Set
\[
\theta(x_1,z)=(\theta_1(x_1,z),\dots,\theta_n(x_1,z))= \left. (1,\dots,1)\Psi(x,0)^t\Psi(x,z)\right |_{x_{k}^{(i)}=0,\ k\geq 2}.
\]
(By $x_1$ we denote the set of variables $x_1^{(1)},\dots,x_1^{(n)}$.)
Consider the decomposition
\[
\theta_i(x_1,z)=1+t^i(x_1) z+\sum_{d=2}^\infty \theta_i^{(d)}(x_1) z^d.
\]
Then
\begin{equation}\label{thetaPsi}
\theta^{(d)}=\left. (1,\dots,1)\Psi_0^t\Psi_d\right |_{x_{k}^{(i)}=0,\ k\geq 2}.
\end{equation}
The functions $t^i(x)$ play a role of the flat coordinates and the following
theorem constructs the corresponding Frobenius manifold (\cite{AKV}, \cite{vdL}):

\begin{thm}\label{Frobenius}
The function
\[
F=\frac{1}{2}\sum_{i=1}^n (t^i \theta^{(2)}_i-\theta^{(3)}_i)
\]
satisfies the WDVV equation in variables $t^i$.
\end{thm}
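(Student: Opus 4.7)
The plan is to derive WDVV from two structural properties of the multi-component KP wave function $\Psi$: the orthogonality $\Psi(x,z)\Psi(x,-z)^t=\Id$ (valid on the locus $x_{2k}^{(i)}=0$) given in the preceding proposition, together with the bilinear KP identity satisfied by $\Psi$ as a consequence of $A|0\ket$ lying in the $\overline{GL}_\infty^{(\pm)}$-orbit of the vacuum. These two ingredients tie derivatives in the KP times $x_1^{(i)}$ to multiplication by the spectral parameter $z$, which is the rigidity needed to recognise a Frobenius structure.

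First I would verify that $(t^1,\dots,t^n)$ is an admissible coordinate system. Expanding $\theta(x_1,z)$ in $z$, the Jacobian $\pa t^i/\pa x_1^{(j)}$ is expressible through entries of $\Psi_0:=\Psi|_{z=0}$ and $\Psi_1$; the relation $\Psi_0\Psi_0^t=\Id$ (the $z^0$-coefficient of orthogonality) together with the bilinear identity then yields generic invertibility. Next I would compute the first two derivatives of $F$ in these coordinates. By~\eqref{thetaPsi} the expression $F=\tfrac{1}{2}\sum_i(t^i\theta_i^{(2)}-\theta_i^{(3)})$ is written in terms of entries of $\Psi_0^t\Psi_d$; extracting successive $z$-coefficients of the orthogonality relation (most importantly $\Psi_1\Psi_0^t=\Psi_0\Psi_1^t$ and the $z^2$ relation $\Psi_0\Psi_2^t+\Psi_2\Psi_0^t=\Psi_1\Psi_1^t$) one should be able to prove $\pa F/\pa t^i=\theta_i^{(2)}$ and that $\pa^2F/\pa t^i\pa t^j$ is symmetric. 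This identifies the flat metric with the identity matrix and $\sum_i t^i$ as the flat unit, matching the normalisations fixed in the previous remark.

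The decisive step is the associativity of the algebra with structure constants $c_{ij}^{\,k}=\pa^3F/\pa t^i\pa t^j\pa t^k$. The cleanest route is to pass to the canonical coordinates $u^a$ naturally present in the multi-component set-up: the columns of $\Psi_0$ simultaneously diagonalise the multiplication, and the rotation coefficients read off from the $z$-linear term of the Sato--type ODE for $\Psi$ satisfy the Darboux--Egoroff equations precisely because $\Psi$ obeys the bilinear KP identity. Dubrovin's reconstruction of a Frobenius manifold from a Darboux--Egoroff system then produces a semi-simple Frobenius structure, and one checks that the $t^i$ introduced via $\theta$ are exactly its flat coordinates, that the metric is $\Id$, and that its prepotential agrees with the $F$ in the statement.

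The main obstacle I foresee is this third step: extracting the Darboux--Egoroff rotation coefficients from the bilinear KP identity for $\Psi$, verifying their symmetry and integrability, and then translating associativity (which is manifest in canonical coordinates) back into the flat $t^i$ coordinates while tracking the normalisations induced by the change of variables. Most of this computational content is already present in~\cite{vdL,AKV}, so the proof as I envisage it ultimately reduces to invoking those results and checking that the conventions used here (flat unit equal to $\sum_i t^i$, metric equal to $\Id$) coincide with the ones in the cited references.
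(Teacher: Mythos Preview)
The paper does not actually prove this theorem: it is stated with an attribution to~\cite{AKV,vdL} and used as a black box, with the subsequent proposition (the identities $\pa F/\pa t^m=\theta^{(2)}_m$ and the topological recursion relation~\eqref{TRR}) likewise imported from~\cite{vdL}. Your proposal is therefore not competing with any argument in the paper; you are sketching the proof that lives in the cited references, and you say as much in your final paragraph.

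As a sketch of that proof your outline is on the right track: the key mechanism in~\cite{vdL} is indeed that the bilinear identity for the orbit $A|0\ket$ forces the rotation coefficients extracted from $\Psi$ to satisfy the Darboux--Egoroff system, after which Dubrovin's reconstruction yields a semisimple Frobenius manifold whose flat coordinates are the $t^i$ and whose prepotential is $F$. The one place where your write-up is slightly loose is the first step: invertibility of the Jacobian $\pa t^i/\pa x_1^{(j)}$ is not a consequence of the orthogonality $\Psi_0\Psi_0^t=\Id$ alone (that only says $\Psi_0$ is orthogonal), but follows from the Sato equation $\pa_{x_1^{(a)}}\Psi(z)=(zE_{aa}+V_a)\Psi(z)$ derived in~\cite{vdL}, which gives $\pa t^i/\pa x_1^{(a)}$ explicitly in terms of entries of $\Psi_0$. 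With that correction, your plan matches the content of the references the paper invokes.
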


In what follows we will need the following properties of the functions $\theta_i$
and $F$. These properties are derived in \cite{vdL} via the analysis of the $n$-KP
hierarchies.

\begin{prop}
The following equalities hold:
\begin{gather}
\frac{\pa F}{\pa t^m}=\theta^{(2)}_m;\\
\label{TRR}
\frac{\pa^2\theta^{(s)}}{\pa t^k\pa t^l}=
\sum_{m=1}^n \frac{\pa^3 F}{\pa t^k\pa t^l\pa t^m}\frac{\pa\theta^{(s-1)}}{\pa t^m}.
\end{gather}
\end{prop}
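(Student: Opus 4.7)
The plan is to derive both identities from the Sato-type linear PDEs that the multi-component KP wave function $\Psi(A,x,z)$ satisfies in the first times $x_1^{(k)}$, combined with the orthogonality $\Psi(x,z)\Psi(x,-z)^t=\Id$ on $x^{(j)}_{2k}=0$ recalled just above. The key input will be a Sato-type equation of the form
\ben
\frac{\pa \Psi}{\pa x_1^{(k)}}(x,z) = \bigl(z E_k + U_k(x)\bigr)\Psi(x,z),
\enn
where $E_k$ is the matrix unit $(E_k)_{ij}=\delta_{ik}\delta_{jk}$ and $U_k$ is a matrix built from $\Psi_0,\Psi_1$. Such an equation is standard in the multi-component KP set-up and ultimately reflects the bilinear identities satisfied by $\Psi$; it can be derived by requiring $\pa_{x_1^{(k)}}\Psi\cdot\Psi(-z)^t$ to have the correct $z$-dependence forced by differentiating the orthogonality relation.

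Extracting the $z^d$-coefficient of this equation yields $\pa\Psi_d/\pa x_1^{(k)}=E_k\Psi_{d-1}+U_k\Psi_d$. Projecting from the left by $(1,\ldots,1)\Psi_0^t$ and invoking $\Psi_0\Psi_0^t=\Id$ (the $z^0$-piece of the orthogonality) should produce a first-order recursion
\ben
\frac{\pa \theta^{(d)}}{\pa x_1^{(k)}} = A_k(x_1)\,\theta^{(d-1)} + B_k(x_1)\,\theta^{(d)},
\enn
for explicit matrices $A_k,B_k$ in terms of $\Psi_0$ and $U_k$. Specialising to $d=1$, together with $\theta^{(0)}=(1,\ldots,1)$ and $\theta^{(1)}=(t^1,\ldots,t^n)$, determines the Jacobian $\pa t^i/\pa x_1^{(k)}$, invertible in the formal regime since $\Psi_0=\Id+O(x)$, and hence realises the change of coordinates $x_1 \leftrightarrow t$. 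Rewriting the recursion in flat coordinates then gives a relation of the shape $\pa\theta^{(s)}/\pa t^k = M^s_k(t)\,\theta^{(s-1)}$ for some matrices $M^s_k$.

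To prove the first identity I would differentiate $F=\tfrac12\sum_i(t^i\theta^{(2)}_i-\theta^{(3)}_i)$ directly and substitute the recursions for $\pa\theta^{(2)}_i/\pa t^m$ and $\pa\theta^{(3)}_i/\pa t^m$; I expect the $t$-dependent pieces to cancel using the higher-order $z$-coefficients of $\Psi(z)\Psi(-z)^t=\Id$ combined with the explicit shape of $M^s_k$, leaving exactly $\theta^{(2)}_m$. Once $\pa F/\pa t^m=\theta^{(2)}_m$ is in hand, the TRR \eqref{TRR} follows: differentiate $\pa\theta^{(s)}/\pa t^k=M^s_k\,\theta^{(s-1)}$ once more in $t^l$, use the $s=2$ case to identify the matrix entries of $M^s_k$ with $\pa\theta^{(2)}/\pa t=\pa^2 F/\pa t\,\pa t$, symmetrise in $k,l$, and invoke associativity (Theorem~\ref{Frobenius}) to put the result in the stated form.

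The hardest part will be the algebraic bookkeeping in the cancellation for the first identity: the multiple orders of the orthogonality must be combined with the Sato equation in a consistent way while the restriction $x^{(i)}_{2k}=0$ is maintained throughout. A subsidiary technical point is pinning down the matrix $U_k$ precisely; its entries must be extracted from $\Psi_1$ by requiring $\pa_{x_1^{(k)}}\Psi\cdot\Psi(-z)^t$ to have the correct polynomial behaviour in $z$. Once $U_k$ is fixed, all remaining manipulations reduce to polynomial identities in the matrix entries of $\Psi_0,\Psi_1$ and $U_k$, so the argument is in principle mechanical.
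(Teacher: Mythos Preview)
The paper does not supply its own proof of this proposition: immediately before the statement it says ``These properties are derived in \cite{vdL} via the analysis of the $n$-KP hierarchies,'' and no argument is given in the present paper. So there is nothing in the paper to compare your proposal against line by line.

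Your outline is broadly in the spirit of the approach in \cite{vdL}: one writes down the Sato-type linear system for $\Psi$ in the first times, extracts recursions among the $\Psi_d$, passes to the $\theta^{(d)}$ via \eqref{thetaPsi}, and then computes in flat coordinates. Two cautions, though. First, the first-order recursion you posit in the form $\partial_{x_1^{(k)}}\theta^{(d)}=A_k\theta^{(d-1)}+B_k\theta^{(d)}$ is not quite what comes out: after using the antisymmetry of the off-diagonal connection matrix (which is what the restriction $x^{(i)}_{2k}=0$ buys), one gets $\partial_{x_1^{(k)}}\theta^{(d)}=(1,\dots,1)\Psi_0^tE_k\Psi_{d-1}$, which is a linear expression in the \emph{matrix} $\Psi_{d-1}$ rather than in the \emph{vector} $\theta^{(d-1)}$. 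The passage to a clean recursion in the $\theta$'s alone happens only after the change to flat coordinates, and there the natural relation is second order (exactly \eqref{TRR}), not first order; so your matrices $M^s_k$ would have to be handled with care. Second, your plan to ``invoke associativity (Theorem~\ref{Frobenius})'' to put the second identity in its final form risks circularity: in \cite{vdL} the logic runs the other way, with the recursion for the $\theta^{(d)}$ established first and WDVV deduced from it. In the present paper Theorem~\ref{Frobenius} is simply quoted from \cite{AKV,vdL}, so you may formally appeal to it, but if you intend a self-contained argument you should derive \eqref{TRR} directly from the Sato system without using WDVV.
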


\begin{rem}
Recall the definition of $\theta$ coming from the Gromov-Witten theories.
 Then it is easy to see that the equations \eqref{TRR} are contained in the set of
the topological recursion relations for $F_0$ (see \cite{G3}).
\end{rem}

We now rewrite the formulas for the vector fields from $\g_\pm$
in terms of $\Psi_k$. For a matrix $A$ we denote by $[A]$ the sum of all entries of $A$:
\[
[A]=(1,\dots,1)A (1,\dots,1)^t.
\]

\begin{prop}\label{.Psi}
\begin{multline}
s.F=\frac{1}{2}\left[\Psi_0^t\left(
-\Psi_2s_1\Psi_0^t
+\Psi_1s_1\Psi_1^t
-\Psi_0s_1\Psi_2^t
\right.\right. \\ \left.\left.
-\Psi_1s_2\Psi_0^t
+\Psi_0s_2\Psi_1^t
-\Psi_0s_3\Psi_0^t
\right)\Psi_0\right],
\end{multline}
and for any $l\geq 1$,
\begin{equation}
(r_l\zeta^{l}).F=
- \frac{1}{2}\left[\Psi_0^t\left( \sum_{i=0}^{l+3} (-1)^i \Psi_{l+3-i}r_l\Psi_i^t  \right)\Psi_0\right].
\end{equation}
\end{prop}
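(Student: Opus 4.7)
The plan is to substitute the identity \eqref{thetaPsi}, namely $\theta^{(d)} = e^t \Psi_0^t \Psi_d$ where $e = (1,\dots,1)^t$, into Propositions \ref{s.F} and \ref{r.F} and match the result to the claim. Three ingredients will drive the reconciliation: the orthogonality $\Psi_0 \Psi_0^t = \Psi_0^t \Psi_0 = \Id$ (obtained by extracting the $z^0$ coefficient of $\Psi(A,x,z)\Psi(A,x,-z)^t = \Id$), the transpose symmetry $[X] = [X^t]$ of the scalar $[X] = e^t X e$, and the loop-group symmetries
\[
s_l^t = (-1)^{l+1} s_l, \qquad r_l^t = (-1)^{l+1} r_l,
\]
forced by $s(-\zeta)^t + s(\zeta) = 0$ and $r(-\zeta)^t + r(\zeta) = 0$. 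The combination of transposition and these symmetries is what will make the apparently extra terms in the claimed formula cancel in pairs.

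For $s.F$, I would substitute term by term in Proposition \ref{s.F} to obtain
\[
s.F = \Bigl[\Psi_0^t\Bigl(-\tfrac12 \Psi_0 s_3 \Psi_0^t + \Psi_0 s_2 \Psi_1^t - \Psi_0 s_1 \Psi_2^t + \tfrac12 \Psi_1 s_1 \Psi_1^t\Bigr) \Psi_0\Bigr].
\]
A short calculation shows that the claimed expression differs from this by exactly
\[
\tfrac12 \Bigl[\Psi_0^t\Bigl(-\Psi_2 s_1 \Psi_0^t + \Psi_0 s_1 \Psi_2^t - \Psi_1 s_2 \Psi_0^t - \Psi_0 s_2 \Psi_1^t\Bigr) \Psi_0\Bigr].
\]
Inside the bracket, transposition combined with $s_1^t = s_1$ sends $\Psi_0^t \Psi_2 s_1 \Psi_0^t \Psi_0$ to $\Psi_0^t \Psi_0 s_1 \Psi_2^t \Psi_0$, and combined with $s_2^t = -s_2$ sends $\Psi_0^t \Psi_1 s_2 \Psi_0^t \Psi_0$ to $-\Psi_0^t \Psi_0 s_2 \Psi_1^t \Psi_0$. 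These two relations collapse the two pairs above to zero, proving the first identity.

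For $r.F$, I would first re-index the double sum in Proposition \ref{r.F} by $i = m'+2$, converting $\tfrac12 \sum_{m+m'=l-1}(-1)^{m'+1}\Psi_{m+2}r_l \Psi_{m'+2}^t$ into $-\tfrac12 \sum_{i=2}^{l+1}(-1)^i \Psi_{l+3-i}r_l\Psi_i^t$. The $l$-th summand of the direct expression then reads
\[
\Bigl[\Psi_0^t\Bigl(-\Psi_{l+3} r_l \Psi_0^t + \Psi_{l+2} r_l \Psi_1^t - \tfrac12 \sum_{i=2}^{l+1}(-1)^i \Psi_{l+3-i} r_l \Psi_i^t\Bigr)\Psi_0\Bigr],
\]
while the target $l$-th summand is $-\tfrac12\bigl[\Psi_0^t \sum_{i=0}^{l+3}(-1)^i \Psi_{l+3-i} r_l \Psi_i^t \,\Psi_0\bigr]$. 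The internal indices $i \in \{2,\dots,l+1\}$ match exactly; the residual discrepancy involves only the four boundary terms indexed by $i \in \{0, 1, l+2, l+3\}$, which arrange into the pairs $(i=0)\leftrightarrow(i=l+3)$ and $(i=1)\leftrightarrow(i=l+2)$. Each pair is of the form $\alpha \,\Psi_a r_l \Psi_b^t + \beta \,\Psi_b r_l \Psi_a^t$, and the identity $[X]=[X^t]$ combined with $r_l^t = (-1)^{l+1}r_l$ yields the relation $\alpha + (-1)^{l+1}\beta = 0$ that holds with the coefficients provided by the comparison, so each pair vanishes under the bracket.

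The main obstacle is the bookkeeping in the $r.F$ case: the re-indexing $i = m'+2$ must be carried out carefully, and the cumulative sign $(-1)^{l+2}\cdot(-1)^{l+1} = -1$ must be tracked in every boundary cancellation. No structurally new ingredient beyond the orthogonality and the transposition symmetry of the bracket is needed, which is consistent with the fact that the formulas in Proposition \ref{.Psi} are essentially the transposition-symmetrized reformulations of the direct substitutions.
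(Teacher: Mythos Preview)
Your proof is correct and follows exactly the approach indicated in the paper: substitute $\theta^{(d)}=(1,\dots,1)\Psi_0^t\Psi_d$ from \eqref{thetaPsi} into Propositions \ref{s.F} and \ref{r.F}, then use $[X]=[X^t]$ together with the symmetry $(s_l)_{\mu\nu}=(-1)^{l-1}(s_l)_{\nu\mu}$ (and likewise for $r_l$) to absorb the boundary terms. The paper's own proof is a one-line reference to precisely these ingredients; you have simply supplied the bookkeeping that the paper leaves to the reader.
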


\begin{proof}
Follows from Propositions \ref{s.F} and \ref{r.F} and Formula \eqref{thetaPsi}. We also use here that $(s_l)_{\mu\nu}=(-1)^{l-1}(s_l)_{\nu\mu}$ and the same for matrices $r_l$, $l=1,2,\dots$.
\end{proof}

\section{Derivative formulas in the KP picture}
We start with the definition of the structure of $G_\pm$ modules
on $F$ (or rather on its completion $\bar F$).

Recall that we have fixed the basis $v^{(i)}_k$, $1\le i\le n$, $k\in\Z+\frac{1}{2}$ of the space $V$.
Let $\C^n$ be a $n$-dimensional vector space with a basis $e_i$, $i=1,\dots,n$.
Consider the isomorphism
\[
V\to \C^n\T\C[t,t,^{-1}], \quad v^{(i)}_k\mapsto e_i\T t^{-k-\frac{1}{2}}.
\]
We define the action of the groups $G_\pm$ and the algebras $\g_\pm$ on
$\C^n\T\C[t,t,^{-1}]$ via the map $\zeta^i\mapsto t^{-i}$ (see Remark \ref{pm}).
For instance, the element
$$
R(\zeta)=\Id + \sum_{i>0} R_i\zeta^i\in G_+
$$ acts as the operator $R(t)=\Id + \sum_{i>0} R_i\T t^{-i}$
on the completed space $V$ with respect to the grading defined by the powers of $t$
(this is an analogue of the energy grading).
This gives the homomorphisms
\[
G_\pm\to \ov{GL}_\infty^\pm,\quad \g_\pm\to\bar\gl_\infty^\pm
\]
and thus produces the
structure of $G_-$ ($\g_-$) module on $F=\Lambda^{\infty/2} V$ and
the structure of $G_+$ ($\g_+$) module on the completed space $\bar F$.
For instance, we can define
the tau-functions, corresponding to any element $S(\zeta)\in G_-$ and $R(\zeta)\in G_+$
(in the latter case these tau-functions are the power series in $x$).

\begin{rem}\label{pm}
We have fixed the action of $\zeta^i$ as the multiplication by
$t^{-i}$ in order to match two different styles of notations: one
coming from the Frobenius structures side (see \cite{G1},
\cite{FSZ}) and the other coming from the KP side (see
\cite{KvdL,KvdL2,vdL}). In fact, the group $G_+$, containing the
power series in variable $\zeta$, corresponds to the group of
operators, which are infinite series in $t^{-1}$ in the KP picture
and vice versa (see the rest of this section). In particular, the
group $G_+$ (which is called the upper-triangular group in
\cite{G1}, \cite{FSZ}) is embedded into the lower-triangular group
$\ov{GL}_\infty^+$.
\end{rem}

Let $A=A(t)$ be an element from the twisted loop group and $a$ be an element
from the twisted loop Lie algebra. Our goal in this section is to compute
the derivatives
\begin{equation}
\frac{\pa}{\pa\ve} \Psi(A\exp(\ve a),x,z)|_{\ve=0}
\end{equation}
We will consider two cases: $a=s(t)=\sum_{l>0} s_l t^{l}$ and $a=r_lt^{-l}$.
The first case is simple and the second is much more involved.

Recall  that the wave function for $A=A(t)$ is equal to:
\begin{equation}
 \label{wave01}
\Phi_{ik}^\pm(A,x,z)=\sum_{\ell\in\Z}\Phi_{ik}^\pm(A,x)_\ell z^\ell=
\frac{\langle 0|\Gamma_+(x) Q_i^{\mp 1}\psi^{\pm(k)}(z)A|0\rangle}
{\langle 0|\Gamma_+(x) A|0\rangle}\, ,
\end{equation}
where
\[
\Gamma_+(x)=
\prod_{i=1}^n \Gamma_+^{(i)}(x^{(i)})\, .
\]
The $\Psi=\Psi^+$ function is given by
\begin{equation}
\label{wave02}
\Psi_{ik}^\pm(A,x,z)=\sum_{\ell=0}^\infty \Psi_{ik}^\pm(A,x)_\ell z^\ell=
\frac{\langle 0|\Gamma_+(x) Q_i^{\mp 1}A\psi^{\pm(k)}(z)|0\rangle}
{\langle 0|\Gamma_+(x) A|0\rangle}\,
\end{equation}
and the corresponding matrix is given by
\begin{equation}
\label{wave03}
\Psi^\pm(A(t),x,z)=
\Phi^\pm(A(t),x,z)A(z)\, .
\end{equation}

For $f(z)=\sum_{i\in\Z} f_iz^i$ let $f(z)_+=\sum_{i\ge 0} f_iz^i$ and $f(z)_-=f(z)-f(z)_+$.
\begin{prop}\label{sKP}
Let $s=s(\zeta)=\sum_{i>0} s_i\zeta^{-i}\in\g_-$, $S_\ve=\exp(\ve s)\in G_-$. Then
\begin{equation}
\label{spsi}
\left.\frac{\pa}{\pa\ve} \Psi(AS_\ve,x,z)\right |_{\ve=0} =\Psi(A,x,z)s(z^{-1})
\end{equation}
or, equivalently,
\[
s.\Psi_k=\sum_{i=0}^{k-1} \Psi_is_{k-i}.
\]
\end{prop}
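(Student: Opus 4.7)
The plan is to exploit the fact that $S_\ve \in G_-$, viewed on the fermionic side, stabilizes the vacuum $|0\rangle$, which reduces the proposition to a purely algebraic matrix manipulation. First I would unfold the identification $\zeta^i \mapsto t^{-i}$: the Lie algebra element $s(\zeta) = \sum_{i>0} s_i \zeta^{-i} \in \g_-$ corresponds to $\sum_{i>0} s_i t^i$, which sends each basis vector $v^{(j)}_k = e_j \otimes t^{-k-1/2}$ to a combination of vectors $v^{(\cdot)}_{k-i}$, $i \geq 1$, all of which carry a strictly smaller linear index in the total ordering of $\{v_\ell\}$. Consequently $\g_- \hookrightarrow \bar\gl_\infty^+$ and $G_- \hookrightarrow \ov{GL}_\infty^+$, and the unipotent lower-triangular group $\ov{GL}_\infty^+$ fixes $|0\rangle = v_{-1/2}\wedge v_{-3/2}\wedge\cdots$, since the correction terms to each $v_\ell$ are already absorbed by the antisymmetry of the wedge. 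Hence $S_\ve |0\rangle = |0\rangle$ for every $\ve$.

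With this in hand I would show that $\Phi(AS_\ve, x, z) = \Phi(A, x, z)$: both the numerator and the denominator of the defining expression \eqref{wave01} depend on $AS_\ve$ only through the vector $AS_\ve|0\rangle = A|0\rangle$. On the matrix-loop side, $(AS_\ve)(z) = A(z)\, S_\ve(z)$ by evaluation of the product at $t = z$. Substituting both observations into \eqref{wave03} gives
\[
\Psi(AS_\ve, x, z) = \Phi(A, x, z)\, A(z)\, S_\ve(z) = \Psi(A, x, z)\, S_\ve(z).
\]

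Differentiating at $\ve = 0$, the factor $\pa_\ve S_\ve(z)|_{\ve=0}$ equals the evaluation at $t=z$ of $s(t) = \sum_{i>0} s_i t^i$, that is $\sum_{i>0} s_i z^i$. Under the convention $\zeta^i \leftrightarrow z^{-i}$ this is precisely $s(z^{-1})$, which yields \eqref{spsi}. Expanding $\Psi(A,x,z) = \sum_{k \geq 0} \Psi_k z^k$ and reading off the coefficient of $z^k$ on both sides, and noting that $s(z^{-1}) = \sum_{l \geq 1} s_l z^l$ contains only strictly positive powers of $z$, produces the coefficient-wise formulation $s.\Psi_k = \sum_{i=0}^{k-1} \Psi_i\, s_{k-i}$.

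The only subtlety in executing this plan is the book-keeping of the convention twist $\zeta \leftrightarrow t^{-1}$ flagged in Remark~\ref{pm}; the essential mechanism is the stabilization of $|0\rangle$ under $G_-$, which makes $\Phi$ insensitive to right-multiplication of $A$ by $S_\ve$. No further computation is required, which is consistent with the paper's remark that this case is the easy one in contrast to $a = r_l t^{-l}$ treated in the sequel.
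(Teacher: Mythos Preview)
Your argument is correct and supplies exactly the ``direct computation'' that the paper leaves implicit: once one sees that $S_\ve$ fixes $|0\rangle$, the wave function $\Phi$ is unchanged and only the right factor $A(z)\mapsto A(z)S_\ve(z)$ moves, giving \eqref{spsi} immediately. There is one notational slip: in the paper's conventions the operators that send $v_\ell$ to strictly \emph{smaller}-index vectors form $\bar\gl_\infty^-$ and $\ov{GL}_\infty^-$ (upper-triangular), not the ``$+$'' versions, and indeed the paper states the homomorphisms as $G_\pm\to\ov{GL}_\infty^\pm$; your description of the action and the conclusion $S_\ve|0\rangle=|0\rangle$ are nonetheless correct, so this does not affect the proof.
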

\begin{proof}
Direct computation.
\end{proof}

\begin{thm}\label{rKP}
Let $ r\zeta^{\ell}\in \g_+$, $R_\ve=\exp(\ve r\zeta^{\ell})\in G_+$. Then
\begin{multline}\label{rpsi}
\left.\frac{\pa}{\pa\ve} \Psi(AR_\ve,x,z)\right |_{\ve=0}
=\Psi(A,x,z)rz^{-\ell}\\
- \sum_{p=1}^\ell \sum_{q=0}^{\ell-p} (-1)^{\ell-p-q}\Psi(A,x)_q r \Psi(A,x)_{\ell-p-q}^t \Psi(A,x,z){z^{-p}}
\end{multline}
or, equivalently,
\[
(r\zeta^\ell).\Psi_k=\Psi_{\ell+k}r - \sum_{p=1}^\ell\sum_{q=0}^{\ell-p}(-1)^{\ell-p-q}\Psi_q r \Psi_{\ell-p-q}^t\Psi_{p+k}.
\]
\end{thm}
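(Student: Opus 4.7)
The plan is to apply the Leibniz rule to $\Psi(AR_\varepsilon,x,z)=\Phi(AR_\varepsilon,x,z)(AR_\varepsilon)(z)$ and differentiate at $\varepsilon=0$. Since $(AR_\varepsilon)(z)=A(z)\exp(\varepsilon r z^{-\ell})$, differentiation of the loop-group factor yields $A(z)rz^{-\ell}$, which combined with $\Phi(A,x,z)$ on the left gives the first summand $\Psi(A,x,z)\,rz^{-\ell}$ of \eqref{rpsi}. The remainder of the proof is devoted to showing that $\partial_\varepsilon \Phi(AR_\varepsilon,x,z)|_{\varepsilon=0}\cdot A(z)$ produces exactly the correction terms.

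For this computation, I would represent the operator $r\zeta^\ell$ on the completed Fock space by a fermion bilinear, $r\zeta^\ell=\sum_{i,j}r_{ji}\alpha^{(ji)}_{-\ell}=\oint\frac{dw}{2\pi i\,w^\ell}\,{:}\psi^+(w)\,r\,\psi^-(w){:}$. Differentiating the ratio $\Phi=N/D$ at $\varepsilon=0$ produces two matrix elements, each involving the bilinear inserted between $A$ and $|0\rangle$. I would use the conjugation identities $A\psi^+(w)A^{-1}=\psi^+(w)A(w)$ and $A\psi^-(w)A^{-1}=A(w)^{-1}\psi^-(w)$ (both follow directly from the action of $A$ on $V$ and its dual) to transfer $A$ past the bilinear, producing $Ar\zeta^\ell|0\rangle=\oint\frac{dw}{2\pi i\,w^\ell}{:}\psi^+(w)\,A(w)rA(w)^{-1}\,\psi^-(w){:}A|0\rangle$. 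Evaluating the remaining VEVs by Wick contractions, the singular OPE $\psi^{+(k)}(z)\psi^{-(a)}(w)\sim\delta_{ka}/(z-w)$ produces a pole that, after integration against $dw/(2\pi i\,w^\ell)$, yields $\sum_{p=1}^{\ell}z^{-p}\cdot(\text{coefficient of }w^{p-1})$, while the regular contractions of $\psi^+(w)$ and $\psi^-(w)$ through $A$ produce entries of $\Psi^+(A,x,w)=\sum_q\Psi_q w^q$ and $\Psi^-(A,x,w)$, respectively.

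The last step is to rewrite the $\Psi^-$ factor in terms of $\Psi^+$: using the orthogonality relation following from $A(t)A(-t)^t=\Id$, the entries of $\Psi^-(A,x,w)$ become entries of $\Psi^+(A,x,-w)^t=\sum_q(-1)^q\Psi_q^t w^q$. Extracting the residue at $w=0$ and reassembling $\Psi(A,x,z)z^{-p}=\Phi(A,x,z)A(z)z^{-p}$, the integrand contributes the finite double sum $-\sum_{p=1}^{\ell}\sum_{q=0}^{\ell-p}(-1)^{\ell-p-q}\Psi_q\, r\,\Psi_{\ell-p-q}^t\,\Psi_{p+k}\,z^{-p}$, exactly matching the correction in \eqref{rpsi}. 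Reading off the $z^k$-coefficients of both sides then recovers the componentwise form of the statement.

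The main obstacle will be the careful bookkeeping of signs and normal-ordering constants in the Wick contractions: namely, (i) confirming that the sign $(-1)^{\ell-p-q}$ originates from the substitution $\Psi^-(A,x,w)\leftrightarrow\Psi^+(A,x,-w)^t$, (ii) verifying that the anomalous constants coming from ${:}\psi^+(w)\,r\,\psi^-(w){:}$ cancel between the numerator and denominator contributions of $\Phi=N/D$, and (iii) checking that the spurious negative powers of $z$ present in intermediate expressions cancel against one another so that $(r\zeta^\ell).\Psi_k$ is indeed a regular power series in $z$, in agreement with the fact that $\Psi$ has only non-negative powers of $z$.
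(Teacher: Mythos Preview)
Your initial decomposition via the Leibniz rule for $\Psi=\Phi\cdot A$ is exactly the paper's first step (the Lemma giving equation~\eqref{Psi}). The approaches diverge in how the main term $\langle 0|\Gamma_+(x)Q_i^{-1}\psi^{+(k)}(z)Art^{-\ell}|0\rangle$ is computed.

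The paper does not attempt a direct Wick/OPE evaluation of this single correlator. Instead it point-splits the bilinear as $r(w,y)=\sum_{a,b}r_{ab}{:}\psi^{+(a)}(w)\psi^{-(b)}(y){:}$, multiplies on the right by the dual wave function $\Phi^-_{jk}(A,x',z)$, and studies the resulting scalar $T_{ij}(z)$ (see~\eqref{T}). The negative part $T(z)_-$ is obtained from the \emph{bilinear identity}---the fact that $A\otimes A$ commutes with $S=\sum_k\mbox{Res}_z\,\psi^{+(k)}(z)\otimes\psi^{-(k)}(z)$---together with the residue computation~\eqref{Res}; the nonnegative part $T(z)_+$ is computed by commuting $\Gamma_+(x)$ through $\psi^{\pm(k)}(z)$ via Theorem~\ref{theorem}. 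One then strips off $\Phi^-$ using $\Phi^+=(\Phi^{-})^{-t}$ to arrive at Proposition~\ref{centraal}, from which the theorem follows by taking $w\to y$ and extracting the coefficient of $y^{-1}$ in $y^{-\ell}$ times the result. The factorization into a product of $\Psi^+$ and $\Psi^-$ that you expect to drop out of ``regular Wick contractions'' is precisely the content of Proposition~\ref{centraal}, and the bilinear identity is the mechanism that produces it.

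Your route---conjugating $A$ through the bilinear and then applying OPE---has a gap at the contraction step. The operator OPE $\psi^{+(k)}(z)\psi^{-(a)}(w)\sim\delta_{ka}/(z-w)$ is an identity of operators, but the expectations are taken against $A|0\rangle$, not the vacuum. For such quasi-free states a Wick theorem does hold, but the relevant two-point function is not $\delta_{ka}/(z-w)$; it is itself built from $\Phi^\pm$ (equivalently, from the tau-function). So the ``regular contractions of $\psi^+(w)$ and $\psi^-(w)$ through $A$'' that you invoke are not free-field data: they are exactly the nontrivial quantities the paper extracts via the bilinear identity. In addition, after conjugation you carry a factor $A(w)rA(w)^{-1}$ which, once paired with the right multiplication by $A(z)$ coming from $\Psi=\Phi A$, does not obviously collapse to an expression in $\Psi_q r\Psi^t_{\ell-p-q}$; the paper sidesteps this entirely by never moving $A$ and using the tensor-product trick instead. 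Your obstacles (i)--(iii) are correctly identified, but the more fundamental missing ingredient is a substitute for the bilinear identity to justify the claimed factorization of the three-fermion correlator.
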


\begin{rem}
Formulas \eqref{spsi} and \eqref{rpsi} can be written in a uniform way:
\begin{multline*}
\left.\frac{\pa}{\pa\ve} \Psi(AG_\ve,x,z) \right |_{\ve=0}\\ =
\Psi(A,x,z)g(z^{-1})
-\left(\Psi(A,x,z)g(z^{-1})\Psi(A,x,-z)^t\right)_- \Psi(A,x,z),
\end{multline*}
where $g(\zeta) + g(-\zeta)^t=0$ and $G_\ve=\exp(\ve g(\zeta))$.
This is simple for $g(\zeta)=s(\zeta)$ and will be proved below
for $g(\zeta)=r\zeta^\ell$.
\end{rem}

The proof of Theorem \ref{rKP}  occupies the rest of this section.
We start with simple Lemma.
\begin{lem}
The following equality holds
up to $O(\epsilon^2)$
\begin{multline}
\label{Psi}
\Psi^+(AR_\ve,x,z)-\Psi^+(A,x,z)
=\epsilon\Psi^+(A,x,z)rz^{-\ell} \\
\qquad\qquad\qquad\qquad
+ \epsilon\left(\frac{\langle 0|\Gamma_+(x) Q_i^{- 1}
\psi^{+(k)}(z)Art^{-\ell}|0\rangle}{\langle 0|\Gamma_+(x) A|0\rangle}\right)_{1\le i,k,\le n}\cdot A(z) \\
-\epsilon\frac{\langle 0|\Gamma_+(x) Art^{-\ell}|0\rangle}{\langle 0|\Gamma_+(x) A|0\rangle}\Psi^+(A,x,z)
\end{multline}
\end{lem}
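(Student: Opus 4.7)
The plan is to compute the first-order $\varepsilon$-expansion of $\Psi^+(AR_\varepsilon,x,z)$ directly from its defining formula $\Psi^+(A,x,z)=\Phi^+(A,x,z)\,A(z)$ and match terms with the right-hand side of the lemma. The heart of the calculation is that $R_\varepsilon = \exp(\varepsilon r\zeta^\ell)\in G_+$ acts on $V$ (and hence, via formula~\eqref{La}, on $\bar F$) as $\mathrm{Id}+\varepsilon\, r t^{-\ell}+O(\varepsilon^2)$ under the identification $\zeta^i\mapsto t^{-i}$ fixed in Remark~\ref{pm}. Correspondingly, the symbol $R_\varepsilon(z)$ equals $\mathrm{Id}+\varepsilon\,rz^{-\ell}+O(\varepsilon^2)$.

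First I would expand $\Phi^+(AR_\varepsilon,x,z)$. Writing the matrix entries as a quotient
\[
\Phi_{ik}^+(AR_\varepsilon,x,z) =
\frac{\langle 0|\Gamma_+(x)Q_i^{-1}\psi^{+(k)}(z)\,AR_\varepsilon|0\rangle}
{\langle 0|\Gamma_+(x)\,AR_\varepsilon|0\rangle},
\]
I insert $R_\varepsilon = \mathrm{Id}+\varepsilon\, r t^{-\ell}+O(\varepsilon^2)$ in both numerator and denominator and apply the quotient rule. This produces two linear-in-$\varepsilon$ contributions: a ``numerator'' term with $\psi^{+(k)}(z)Art^{-\ell}$ inserted, divided by $\langle 0|\Gamma_+(x)A|0\rangle$, and a ``denominator'' correction proportional to $\Phi_{ik}^+(A,x,z)\cdot\langle 0|\Gamma_+(x)Art^{-\ell}|0\rangle/\langle 0|\Gamma_+(x)A|0\rangle$.

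Next I would multiply by $(AR_\varepsilon)(z)=A(z)R_\varepsilon(z)=A(z)+\varepsilon A(z)rz^{-\ell}+O(\varepsilon^2)$, keeping only terms of order $\varepsilon$. Three terms survive: the product of $\Phi^+(A,x,z)$ with the extra $\varepsilon A(z)rz^{-\ell}$ yields $\varepsilon \Psi^+(A,x,z)\,rz^{-\ell}$ (the first term in~\eqref{Psi}); the numerator correction from the previous step, right-multiplied by $A(z)$, gives the second (matrix) term; and the denominator correction, multiplied by $\Phi^+(A,x,z)A(z)=\Psi^+(A,x,z)$, gives the third term. Matching them with the right-hand side of the claim completes the proof.

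The only real potential source of trouble is bookkeeping with the conventions $\zeta^i\leftrightarrow t^{-i}$ versus the arguments of $\Phi^+$ and $A(z)$, and making sure the lifted action of $rt^{-\ell}$ on semi-infinite wedges (as given by~\eqref{La}) is consistent with the Clifford-algebra side used to define $\Phi^+$. Once these are pinned down, the computation is purely mechanical: it is the direct calculation promised after Proposition~\ref{sKP}, with no additional identities required.
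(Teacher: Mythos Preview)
Your proposal is correct and matches the paper's approach exactly: the paper merely states that this is ``a straightforward calculation using the formulas (\ref{wave01}--\ref{wave03}),'' and what you have written is precisely that calculation spelled out --- expand $R_\varepsilon$ to first order in both the numerator and denominator of $\Phi^+$, apply the quotient rule, then multiply by $(AR_\varepsilon)(z)=A(z)+\varepsilon A(z)rz^{-\ell}$ and collect the three surviving $O(\varepsilon)$ terms.
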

\begin{proof}
A straightforward calculation using the formula's (\ref{wave01}-\ref{wave03}).
\end{proof}
In order to compute the right hand side of \eqref{Psi} we introduce the generating series
\[
r(w,y)=\sum_{a,b=1}^n r_{ab}:\psi^{+(a)}(w)\psi^{-(b)}(y):
\]
and replace $rt^{-l}$ in \eqref{Psi} with $r(w,y)$.
Later on we will take the residue of $y^{-\ell} \lim_{w\to y} r(w,y)$
which is equal to $r t^{-\ell}$.


In what follows we will need the following lemma:
\begin{lem}
$a)$. $A\otimes A$ commutes with
\[
S=\sum_{k=1}^n \mbox{Res}_z\,\psi^{+(k)}(z)\otimes \psi^{-(k)}(z)\, ,
\]
i.e.,
\begin{multline}\label{S}
\sum_{k=1}^n \mbox{Res}_z\,\psi^{+(k)}(z)A|0\rangle\otimes \psi^{-(k)}(z)A|0\rangle\\
=\sum_{k=1}^n \mbox{Res}_z\, A\psi^{+(k)}(z)|0\rangle\otimes A\psi^{-(k)}(z)|0\rangle=0
\end{multline}
and
\[
S(|0\rangle\otimes |0\rangle)=0
\]
$b)$. We have
\begin{equation}
\begin{split}\label{Res}
\sum_{k=1}^n \mbox{Res}_z\,&\psi^{+(k)}(z):\psi^{+(a)}(w)\psi^{-(b)}(y):|0\rangle\otimes \psi^{-(k)}(z)|0\rangle\\[2mm]
&=-\mbox{Res}_z\,\delta(z-y)\psi^{+(a)}(w)|0\rangle\otimes\psi^{-(b)}(z)|0\rangle\, \\[2mm]
&=-\sum_{i\ge 0}\psi_{-i-\frac12}^{+(a)}w^i|0\rangle\otimes\sum_{j\ge 0}\psi_{-j-\frac12}^{-(b)}y^j|0\rangle\,
\end{split}
\end{equation}
\end{lem}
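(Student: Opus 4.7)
The conceptual starting point is to recognize $S=\sum_{k,i}\psi^{+(k)}_i\T\psi^{-(k)}_{-i}$ as the image in the Clifford algebra of the identity element of $\mathrm{End}(V)\simeq V\T V^*$. Under the identification $\psi^{+(k)}_i=\psi_{v^{(k)}_{-i}}$ and $\psi^{-(k)}_i=\psi_{(v^{(k)}_i)^*}$, the operator $S$ corresponds to the basis-independent tensor $\sum_j v_j\T v_j^*$; this is what makes it a Casimir-like quantity.

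For part (a), the plan is to establish the two facts separately and then combine them. First, $S(|0\ket\T|0\ket)=0$ is immediate from the expansion: in every summand either $i>0$, in which case $\psi^{+(k)}_i|0\ket=0$, or $-i>0$, in which case $\psi^{-(k)}_{-i}|0\ket=0$; the middle case $i=0$ is excluded since $i\in\Z+\tfrac12$. Second, the commutation $[A\T A,S]=0$ for $A\in GL_\infty$ follows from the standard invariance of the Casimir: conjugation by $A$ sends $\psi_v\mapsto\psi_{Av}$ and $\psi_\xi\mapsto\psi_{(A^{-1})^t\xi}$, so $\sum_j v_j\T v_j^*$ is preserved by the diagonal action. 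Combining, $S(A|0\ket\T A|0\ket)=(A\T A)S(|0\ket\T|0\ket)=0$, which is \eqref{S}.

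For part (b), the plan is to push $\psi^{+(k)}(z)$ past $N:=\,:\psi^{+(a)}(w)\psi^{-(b)}(y):$ using Clifford anticommutators and then use part (a) to kill the leftover. Since $N$ and $\psi^{+(a)}(w)\psi^{-(b)}(y)$ differ by the c-number $\la 0|\psi^{+(a)}(w)\psi^{-(b)}(y)|0\ket$ that commutes with $\psi^{+(k)}(z)$, only the anticommutation relations $\psi^{+(k)}(z)\psi^{+(a)}(w)=-\psi^{+(a)}(w)\psi^{+(k)}(z)$ and $\psi^{+(k)}(z)\psi^{-(b)}(y)+\psi^{-(b)}(y)\psi^{+(k)}(z)=\delta_{kb}\,\delta(z-y)$ are needed; iterating them produces
\[
\psi^{+(k)}(z)\,N\,|0\ket=-\delta_{kb}\,\delta(z-y)\,\psi^{+(a)}(w)|0\ket+N\,\psi^{+(k)}(z)|0\ket.
\]
Tensoring with $\psi^{-(k)}(z)|0\ket$, summing over $k$ and taking $\mathrm{Res}_z$, the first term is precisely the claimed right-hand side of \eqref{Res}, while the second becomes $(N\T\Id)\bigl(S(|0\ket\T|0\ket)\bigr)=0$ by part (a). The final mode-expansion equality in \eqref{Res} is then just $\psi^{\pm(c)}(z)|0\ket=\sum_{j\ge 0}\psi^{\pm(c)}_{-j-1/2}z^j|0\ket$.

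The main delicate point is the sign bookkeeping through the normal-ordering subtraction during the commutation step; however, because the subtraction is a c-number that drops out of every commutator, the entire cancellation of the non-contraction remainder is delegated to part (a). Thus (a) and (b) are not independent verifications but a single coordinated calculation: (a) supplies exactly the vanishing identity that the residue manipulation in (b) needs.
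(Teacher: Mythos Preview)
Your proof is correct. The paper states this lemma without proof, treating it as standard background (part (a) is essentially a restatement of the Proposition in Section~\ref{semi} characterizing $GL_\infty$-orbits via the bilinear identity, together with the well-known Casimir invariance of $S$), so there is no ``paper's own proof'' to compare against. Your identification of $S$ with the basis-independent tensor $\sum_j v_j\otimes v_j^*$ and the conjugation formulas $A\psi_vA^{-1}=\psi_{Av}$, $A\psi_\xi A^{-1}=\psi_{(A^{-1})^t\xi}$ is exactly the intended reasoning, and your handling of part (b)---reducing the commutator with the normal-ordered product to the unordered product since the subtraction is central, then invoking (a) to kill the non-contraction remainder---is clean and correct.
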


The central point of the proof is the following Proposition:

\begin{prop}\label{centraal}
\begin{multline}
\label{wave5}
\left(
\frac{\langle 0|Q_i^{-1}\Gamma_+(x)\psi^{+(k)}(z)A:\psi^{+(a)}(w)\psi^{-(b)}(y):|0\rangle}
{\langle 0|\Gamma_+(x) A|0\rangle}\right)_{1\le i,k,\le n}\\
\shoveleft{=\frac{\langle 0|\Gamma_+(x)A:\psi^{+(a)}(w)\psi^{-(b)}(y):|0\rangle}
{\langle 0|\Gamma_+(x) A|0\rangle}\Phi^+(A,x,z)}\\
-\sum_{p,q,s\ge 0}\Psi^+(A,x)_qw^q E_{ab}\Psi^-(A,x)_s \frac{y^{p+s}}{z^{p+1}}\Phi^+(A,x,z)
\end{multline}
\end{prop}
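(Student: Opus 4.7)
The plan is to combine the bilinear identity of Lemma~(a) with the explicit residue formula of Lemma~(b) and with the matrix orthogonality $\Psi(A,x,z)\Psi(A,x,-z)^t=\Id$ from the proposition stated just before Theorem~\ref{Frobenius}.

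Setting $|v_1\ket := {:}\psi^{+(a)}(w)\psi^{-(b)}(y){:}|0\ket$, I will first apply the bilinear identity (part~$a$) to $|v_1\ket\T|0\ket$, using part~$b$ to evaluate the right-hand side; this produces the identity
\begin{equation*}
\sum_{k=1}^n \mathrm{Res}_z\,\psi^{+(k)}(z)A|v_1\ket\T\psi^{-(k)}(z)A|0\ket = -(A\T A)\sum_{p,s\ge 0}w^p y^s\,\psi^{+(a)}_{-p-\frac12}|0\ket\T\psi^{-(b)}_{-s-\frac12}|0\ket.
\end{equation*}
I then contract both sides with $\bra 0|Q_i^{-1}\Gamma_+(x)\T\bra 0|\Gamma_+(x)$ and divide by $\tau_0^2$. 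The right-hand side factorises: the numerators $\bra 0|Q_i^{-1}\Gamma_+(x)A\psi^{+(a)}_{-p-\frac12}|0\ket/\tau_0$ and $\bra 0|\Gamma_+(x)A\psi^{-(b)}_{-s-\frac12}|0\ket/\tau_0$ recover the $p$-th and $s$-th Taylor coefficients of the $a$-th column of $\Psi^+(A,x,z)$ and the $b$-th column of $\Psi^-(A,x,z)$ respectively, assembling into the block $\Psi^+(A,x)_q\,E_{ab}\,\Psi^-(A,x)_s$, with the exponent pattern $y^{p+s}/z^{p+1}$ arising from expanding the residue as a geometric series in $y/z$.

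On the left-hand side the factor $\bra 0|\Gamma_+(x)\psi^{-(k)}(z)A|0\ket/\tau_0$ is, up to the vertex-operator prefactors $\gamma(x^{(j)},\pm[z^{\pm 1}])$ produced by commuting $\Gamma_+(x)$ past $\psi^{-(k)}(z)$, the $k$-th column of $\Phi^-(A,x,z)$. The identity $\Psi(A,x,z)\Psi(A,x,-z)^t=\Id$ implies that the bilinear pairing $\sum_k\mathrm{Res}_z(\cdot)\,\Phi^-(A,x,z)$ is invertible on the space of Laurent series in $z$, so inverting it recovers the individual matrix entry $\bra 0|Q_i^{-1}\Gamma_+(x)\psi^{+(k)}(z)A|v_1\ket/\tau_0$ as a formal Laurent series in $z$ and produces the right-multiplicative factor $\Phi^+(A,x,z)$.

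The disconnected term $\frac{\bra 0|\Gamma_+(x)A|v_1\ket}{\tau_0}\cdot\Phi^+(A,x,z)$ in the statement arises from the scalar piece of normal ordering, namely ${:}\psi^{+(a)}(w)\psi^{-(b)}(y){:}=\psi^{+(a)}(w)\psi^{-(b)}(y)-\delta_{ab}/(y-w)$: the subtracted $c$-number propagates through the bilinear identity as a constant multiple of $\bra 0|Q_i^{-1}\Gamma_+(x)\psi^{+(k)}(z)A|0\ket=\tau_0\,\Phi^+(A,x,z)_{ik}$. The main obstacle is the inversion step of the previous paragraph: the bilinear identity stores information about the target matrix only through its $k$-pairing against $\Phi^-$, and unwinding this pairing to obtain each $(i,k)$-entry as a Laurent series in $z$ requires a careful use of the orthogonality of $\Psi$; keeping track of signs, of the vertex-operator prefactors arising from commuting $\Gamma_+(x)$ past fermion fields, and of the distinction between $\Phi^+$ and $\Psi^+=\Phi^+A(z)$ constitutes the bulk of the computation.
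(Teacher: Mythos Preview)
Your plan has a genuine gap at the ``inversion'' step. The bilinear identity of Lemma~(a) only produces the \emph{residue} in $z$ of the pairing
\[
T_{ij}(z)\;=\;\sum_k\left(\text{LHS of \eqref{wave5}}\right)_{ik}\cdot\Phi^-_{jk}(A,x,z),
\]
i.e.\ the single coefficient $T_{-1}$. The map $f(z)\mapsto\mathrm{Res}_z\,f(z)\Phi^-(A,x,z)$ is a linear functional, not an automorphism of Laurent series, so there is nothing to invert; the orthogonality $\Psi(z)\Psi(-z)^t=\Id$ lets you invert the \emph{matrix} $\Phi^-(z)^t$ pointwise in $z$, but does not recover the lost $z$-coefficients. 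The paper supplies the missing coefficients by two separate devices that you do not mention: first, it introduces a second set of variables $x'$ in the right-hand tensor factor and observes that applying $-\sum_i\partial/\partial {x'}^{(i)}_1$ shifts the fermion mode index, which promotes the residue computation to all of $T(z)_-$ (this is where the factor $\sum_{p\ge0}y^p/z^{p+1}$ actually comes from, not from a geometric expansion of $\delta(z-y)$); second, it computes $T(z)_+$ directly by commuting $\Gamma_+(x)$ past $\psi^{\pm(k)}(z)$ via the vertex-operator formulas of Theorem~\ref{theorem} and~\eqref{comgam}, finding that after setting $x'=x$ the exponential prefactors cancel and $T(z)_+$ collapses to the scalar $\frac{\langle 0|\Gamma_+(x)A{:}\psi^{+(a)}(w)\psi^{-(b)}(y){:}|0\rangle}{\langle 0|\Gamma_+(x)A|0\rangle}\,\delta_{ij}$. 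Only after both pieces are known does one right-multiply by $\Phi^+(A,x,z)=(\Phi^-(A,x,z)^t)^{-1}$ to obtain~\eqref{wave5}.

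This also means your explanation of the disconnected term is incorrect: it is not a normal-ordering remnant (both sides carry the same normal-ordered product), but precisely the contribution of $T(z)_+$ just described. Finally, a smaller point: your contraction $\langle 0|\Gamma_+(x)$ on the second tensor factor has the wrong charge and would vanish identically; the paper inserts $Q_j$ there, which is what makes the result a matrix indexed by $(i,j)$ rather than a column vector.
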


We first deduce Theorem \ref{rKP} from Proposition \ref{centraal}.\\
{\it Proof of Theorem \ref{rKP}}. Multiplying \eqref{wave5} by $r_{ab}$ and summing over all
$a$ and $b$, we obtain
\begin{multline}
\label{wave6}
\left(
\frac{\langle 0|Q_i^{-1}\Gamma_+(x)\psi^{+(k)}(z)Ar(w,y)|0\rangle}
{\langle 0|\Gamma_+(x) A|0\rangle}\right)_{1\le i,k,\le n}\\
\shoveleft{=\frac{\langle 0|\Gamma_+(x)Ar(w,y)|0\rangle}
{\langle 0|\Gamma_+(x) A|0\rangle}\Phi^+(A,x,z)}\\
-\sum_{p,q,s\ge 0}\Psi^+(A,x)_qw^q r \Psi^-(A,x)_s \frac{y^{p+s}}{z^{p+1}}\Phi^+(A,x,z)
\end{multline}
Now multiply from the right by $A(z)y^{-\ell}$, take $w=y$ and select the coefficient of $y^{-1}$. Using (\ref{wave01}
-\ref{wave03}) and
(\ref{Psi}) this gives up to $O(\epsilon^2)$:
\begin{multline}
\label{Psi2}
\Psi^+(AR_\ve,x,z)=\Psi^+(A,x,z)+\epsilon\Psi^+(A,x,z)rz^{-\ell}\\
- \sum_{p=1}^\ell \sum_{q=0}^{\ell-p} \Psi^+(A,x)_q r \Psi^-(A,x)_{\ell-p-q} \Psi^+(A,x,z){z^{-p}}
\end{multline}
Recall that with the restriction $x^{(i)}_{2\ell}=0$, one has
\[
\Psi^-(A,x,z)=\Psi^+(A,x,-z)^t,
\]
(see \cite{vdL}). Thus we obtain
\begin{multline}
\label{Psi3}
\Psi^+(AR_\ve,x,z)=\Psi^+(A,x,z)+\epsilon\Psi^+(A,x,z)rz^{-\ell}\\
- \ve \sum_{p=1}^\ell \sum_{q=0}^{\ell-p} (-)^{\ell-p-q}\Psi^+(A,x)_q r \Psi^+(A,x)_{\ell-p-q}^t \Psi^+(A,x,z){z^{-p}}
\end{multline}
Theorem is proved.\hfill$\square$
\\
\

In the rest of the section we prove Proposition \ref{centraal}.\\
{\it Proof of Proposition \ref{centraal}.} Our goal is to compute the expression
\begin{multline}
\label{T}
T(z)=T_{ij}(z)=\sum_{k=1}^n \Bigl(\frac{\langle 0|
Q_i^{-1}\Gamma_+(x)\psi^{+(k)}(z)A:\psi^{+(a)}(w)\psi^{-(b)}(y):|0\rangle}
{\langle 0|\Gamma_+(x) A|0\rangle}\\
\otimes  \frac{\langle 0|Q_j\Gamma_+(x')\psi^{-(k)}(z)A|0\rangle}
{\langle 0|\Gamma_+(x') A|0\rangle}\Bigr)
\end{multline}
If we put $x=x'$ in (\ref{T}) and forget the tensor product, then (\ref{T}) is the
multiplication of the $i$-th row of the left-hand side of (\ref{wave5})
with the $j$-th row of $\Phi^-(A,x,z)$. So we obtain the left-hand side of
(\ref{wave5}) by putting the results of the calculations of (\ref{T}) for all $1\le i,j\le n$ in a matrix, multiplying this by $\Phi^+(A,x,z)$ and using that $\Phi^+(A,x,z)=\Phi^-(A,x,z)^{t\, -1}$.

Let $T(z)=\sum_{k\in \Z} T_k z^k$ and $T(z)_+=\sum_{k\ge 0} T_kz^k$, $T(z)_-=T(z)-T(z)_+$.
We first compute $T(z)_-$.

Using \eqref{S} we obtain that $T_{-1}=\mbox{Res}_z T(z)$ is equal to
\begin{multline}
\label{wave2}
\sum_{k=1}^n \mbox{Res}_z\, \Bigl(\frac{\langle
0|Q_i^{-1}\Gamma_+(x)A\psi^{+(k)}(z):\psi^{+(a)}(w)\psi^{-(b)}(y):|0\rangle}
{\langle 0|\Gamma_+(x) A|0\rangle}\\
\otimes\frac{\langle 0|Q_j\Gamma_+(x')A\psi^{-(k)}(z)|0\rangle}
{\langle 0|\Gamma_+(x') A|0\rangle}\Bigr)
\end{multline}

Using \eqref{Res} we rewrite the last formula as
\begin{equation}\label{zy}
\begin{split}
&-\mbox{Res}_z\,\delta(z-y)\frac{\langle 0|Q_i^{-1}\Gamma_+(x)A\psi^{+(a)}(w)|0\rangle}{\langle 0|\Gamma_+(x) A|0\rangle}\otimes\frac{\langle 0|Q_j\Gamma_+(x')A\psi^{-(b)}(z)|0\rangle}{\langle 0|\Gamma_+(x') A|0\rangle}\\[2mm]
&=-\sum_{r\ge 0}\frac{\langle 0|Q_i^{-1}\Gamma_+(x)A\psi_{-r-\frac12}^{+(a)}w^r|0\rangle}
{\langle 0|\Gamma_+(x) A|0\rangle}
\otimes\sum_{s\ge 0}\frac{\langle 0|Q_j\Gamma_+(x')A\psi_{-s-\frac12}^{-(b)}y^s|0\rangle}{\langle 0|\Gamma_+(x')
A|0\rangle}\\[2mm]
&=-\sum_{r\ge 0}\Psi^+_{ia}(A,x)_rw^r\otimes\sum_{s\ge 0}\Psi^-_{jb}(A,x')_s y^s
\, .
\end{split}
\end{equation}
Now, the derivative of the second factor of the first line of \eqref{zy} with respect to
$-\sum_{i=1}^n\frac{\partial}{\partial {x'}^{(i)}_1}$, produces
 an extra term $-\sum_{i=1}^n \alpha_1^{(i)}$ between $\Gamma_+(x')$ and $A$.
We get two of such terms, one for the numerator and one for the denominator.
This element commutes with $A$ and gives $0$ when acting on the vacuum, hence the "denominator"-term is zero. Since
\[
-\sum_{i=1}^n [ \alpha_1^{(i)},\psi^{-(k)}_j  ]=\psi^{-(k)}_{j+1}\, ,
\]
one deduces that
\begin{multline}
T_{-2}=\sum_{k=1}^n \mbox{Res}_z\,z  \Bigl(\frac{\langle 0|Q_i^{-1}\Gamma_+(x)\psi^{+(k)}(z)A:\psi^{+(a)}(w)\psi^{-(b)}(y):|0\rangle}
{\langle 0|\Gamma_+(x) A|0\rangle}\\
\shoveright{ \otimes \frac{\langle 0|Q_j\Gamma_+(x')\psi^{-(k)}(z)A|0\rangle}
{\langle 0|\Gamma_+(x') A|0\rangle}\Bigr)}\\
=-y\sum_{r\ge 0}\Psi^+_{ia}(A,x)_rw^r\otimes\sum_{s\ge 0}\Psi^-_{jb}(A,x')_s y^s
\end{multline}
Repeating this procedure, we obtain
\begin{equation}
\label{wave3}
T(z)_-=-\sum_{p=0}^\infty \sum_{r\ge 0}\Psi^+_{ia}(A,x)_rw^r\otimes\sum_{s\ge 0}\Psi^-_{jb}(A,x')_s \frac{y^{p+s}}{z^{p+1}}
\end{equation}
We now want to calculate
the $+$-part of $T(z)$ (see (\ref{T})).
For this we
%
%
%
%
look at
$
\langle 0|Q_i^{-1}\Gamma_+(x)\psi^{+(k)}(z)
$
and commute  $\Gamma_+(x)$ with $\psi^{+(k)}(z)$.  Using Theorem \ref{theorem} and (\ref{comgam}) we obtain
\begin{multline}
\langle 0|Q_i^{-1}\Gamma_+(x)\psi^{+(k)}(z)=
z^{\delta_{ik}-1}\gamma(x^{(k)}, [z])\langle 0|Q_i^{-1}Q_k\Gamma_+(x)\Gamma_{+}^{(k)}(-
[z^{-1}])\\
=z^{\delta_{ik}-1}\gamma(x^{(k)}, [z])\exp\left(-\sum_{j=1}^\infty
\frac{\partial}{\partial x_j^{(k)}}\frac{z^{-j}}{j}\right)\langle 0|Q_i^{-1}Q_k\Gamma_+(x)
\end{multline}
which means that
\begin{multline}
\left(
\frac{\langle 0|Q_i^{-1}\Gamma_+(x)\psi^{+(k)}(z)A:\psi^{+(a)}(w)\psi^{-(b)}(y):|0\rangle}{\langle 0|\Gamma_+(x)
A|0\rangle}\right)_{1\le i,k\le n}\\
=\sum_{p=0}^\infty B(x)_p z^{-p}\sum_{k=1}^n \gamma(x^{(k)},[z])E_{kk}\, ,
\end{multline}
with
\begin{multline}
\sum_{p=0}^\infty B_{ik}(x)_p z^{-p}=\quad\left(\langle 0|\Gamma_+(x) A|0\rangle\right)^{-1} z^{\delta_{ik}-1}\\
\times\exp\left(-\sum_{j=1}^\infty
\frac{\partial}{\partial x_j^{(k)}}\frac{z^{-j}}{j}\right)\langle 0|Q_i^{-1}Q_k\Gamma_+(x)A:\psi^{+(a)}(w)
\psi^{-(b)}(y):|0\rangle.
\end{multline}
In particular
\[
B_{ik}(x)_0=\delta_{ik}\frac{\langle 0|\Gamma_+(x)A:\psi^{+(a)}(w)\psi^{-(b)}(y):|0\rangle}{\langle 0|\Gamma_+(x)
A|0\rangle}
\]
In a similar way we have that
\begin{multline*}
\langle 0|Q_j\Gamma_+(x)\psi^{-(k)}(z)A|0\rangle\\
=z^{\delta_{jk}-1}\gamma(-x^{(k)}, [z])\langle 0|Q_jQ_k^{-1}\Gamma_+(x)\Gamma_{+}^{(k)}(
[z^{-1}])A| 0\rangle\\
=z^{\delta_{jk}-1}\gamma(-x^{(k)}, [z])\exp\left(\sum_{j=1}^\infty
\frac{\partial}{\partial x_j^{(k)}}\frac{z^{-j}}{j}\right)\langle 0|Q_jQ_k^{-1}\Gamma_+(x)A| 0\rangle
\end{multline*}
and thus
\[
\left(\frac{\langle 0|Q_j\Gamma_+(x')\psi^{-(k)}(z)A|0\rangle}
{\langle 0|\Gamma_+(x') A|0\rangle}\right)_{1\le j,k\le n}=
\sum_{p=0}^\infty C(x')_p z^{-p}\sum_{k=1}^n \gamma(-{x'}^{(k)},[z])E_{kk}\, ,
\]
with $C(x')_0=I_n$.

Now forget the tensor product in (\ref{T}). We see that this is as row times a column in a matrix multiplication.
Putting
$x=x'$, one has that for every $k$
\[
\gamma(x^{(k)},[z])
\gamma(-{x'}^{(k)},[z])=1\, .
\]
Thus the $+$-part of (\ref{T}) (with the tensor product replaced with the usual product)
is equal to $\sum_k B_{ik}(x)_0 C_{jk}(x)_0$, or in other words
\begin{multline}
\label{wave4}
\sum_{k=1}^n  \Bigl(
\frac{\langle 0|Q_i^{-1}\Gamma_+(x)\psi^{+(k)}(z)A:\psi^{+(a)}(w)\psi^{-(b)}(y):|0\rangle}
{\langle 0|\Gamma_+(x) A|0\rangle}\\
\times \frac{\langle 0|Q_j\Gamma_+(x)\psi^{-(k)}(z)A|0\rangle}
{\langle 0|\Gamma_+(x) A|0\rangle}
\Bigr)_+\\
=\frac{\langle 0|\Gamma_+(x)A:\psi^{+(a)}(w)\psi^{-(b)}(y):|0\rangle}
{\langle 0|\Gamma_+(x) A|0\rangle}\delta_{ij}.
\end{multline}
The Proposition \ref{centraal} follows.\hfill$\square$

\section{The main theorem}
In this section we compare the derivatives for the KP wave functions
(obtained in the previous section) with the restriction of the Y.-P.~Lee
derivatives written in flat coordinates.

We note that the Frobenius structure defined in Theorem \ref{Frobenius}
can be rewritten as
\[
F(x_1,\dots,x_n)=
\left.\frac{1}{2} \left[ -\Psi_0^t\Psi_3 + \Psi_0^t\Psi_2\Psi_1^t\Psi_0 \right]\right |_{x^{(i)}_k=0, k\ge 2}.
\]
Here $x_i=x^{(i)}_1$ and $[M]$ denotes the sum of elements of a matrix $M$,
i.~e., $[M]=(1,\dots,1)M(1,\dots,1)^t$.
In Proposition \ref{flat} we derive the formula for the vector fields
derivatives of $F$ in the coordinates $t^i$. Namely, suppose
we have a family of the group elements $A(\ve;\zeta)$. Then our
general procedure defines the family of $\Psi_k(\ve)$, $\theta^{(k)}(\ve)$ and $F(\ve)$.
In particular, the flat coordinates $t^i(\ve)$ also depend on $\ve$.
Considering $F(\ve)$ as the family of functions in variables $x_i$,
we have the corresponding derivatives at $\ve=0$, which we denote by
$\Dot F$. For any $\ve$ we have an equality
\[
F(\ve;x_1,\dots,x_n)=F(\ve; t^1(\ve),\dots,t^n(\ve)).
\]
Our goal is to find the derivative of $F(\ve)$ with ``frozen'' variables
$t_i(\ve)$, i.e. we want to forget the dependence of the variables
$t_i(\ve)$ on $\ve$. We keep only the dependence on $\ve$ of the coefficients of
$F(\ve)$, written as a series in $t^i(\ve)$.
We denote the derivative in ``frozen'' coordinates by $\frac{\partial F}{\partial\epsilon}$.
\begin{prop}\label{flat}
The derivative in flat coordinates is given by the formula
\[
\frac{\partial F}{\partial\epsilon}= \frac{1}{2}\left[
\Psi_0^t(-\Dot\Psi_3\Psi_0^t + \Dot\Psi_2\Psi_1^t
-\Dot\Psi_1\Psi_2^t + \Dot\Psi_0\Psi_3^t) \Psi_0 \right]
\]
\end{prop}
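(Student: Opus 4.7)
The plan is to reduce the statement to a chain-rule identity and then to a straightforward algebraic manipulation that hinges on the bilinear constraint $\Psi(x,z)\Psi(x,-z)^t=\Id$. The starting point is the chain-rule equality
\[
\dot F \;=\; \frac{\partial F}{\partial\ve} + \sum_{i=1}^n \frac{\partial F}{\partial t^i}\,\dot t^i
\;=\; \frac{\partial F}{\partial\ve} + \sum_{i=1}^n \theta^{(2)}_i\,\dot\theta^{(1)}_i,
\]
obtained from $\partial F/\partial t^m=\theta^{(2)}_m$ (the proposition preceding Remark~2.2) and $t^i=\theta^{(1)}_i$. I would then rewrite the correction sum via~\eqref{thetaPsi} as $[\Psi_0^t\Psi_2(\dot\Psi_1^t\Psi_0+\Psi_1^t\dot\Psi_0)]$, and differentiate $F=\tfrac{1}{2}[-\Psi_0^t\Psi_3+\Psi_0^t\Psi_2\Psi_1^t\Psi_0]$ directly. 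Two of the six resulting summands in $\dot F$ exactly cancel the correction term, leaving
\[
\frac{\partial F}{\partial\ve}=\tfrac{1}{2}\bigl[-\dot\Psi_0^t\Psi_3-\Psi_0^t\dot\Psi_3+\dot\Psi_0^t\Psi_2\Psi_1^t\Psi_0+\Psi_0^t\dot\Psi_2\Psi_1^t\Psi_0-\Psi_0^t\Psi_2\dot\Psi_1^t\Psi_0-\Psi_0^t\Psi_2\Psi_1^t\dot\Psi_0\bigr].
\]

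The next step is to match these six summands against the four-term right-hand side of the claim. Two tools are used repeatedly: (a) $[M]=[M^t]$, since $[M]$ is the sum of the entries of $M$; and (b) the orthogonality $\Psi(x,z)\Psi(x,-z)^t=\Id$, which at $z^0$ gives $\Psi_0^t\Psi_0=\Psi_0\Psi_0^t=\Id$ and, on differentiation, $\dot\Psi_0^t\Psi_0=-\Psi_0^t\dot\Psi_0$. Three of the six summands are immediately accounted for: $-\Psi_0^t\dot\Psi_3$ and $\Psi_0^t\dot\Psi_2\Psi_1^t\Psi_0$ already appear in the target (after absorbing $\Psi_0^t\Psi_0$), and $[\Psi_0^t\Psi_2\dot\Psi_1^t\Psi_0]=[\Psi_0^t\dot\Psi_1\Psi_2^t\Psi_0]$ by (a) accounts for the $\dot\Psi_1$ term.

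The remaining task, and the main obstacle, is to show that the three leftover summands $-\dot\Psi_0^t\Psi_3+\dot\Psi_0^t\Psi_2\Psi_1^t\Psi_0-\Psi_0^t\Psi_2\Psi_1^t\dot\Psi_0$ inside $[\cdot]$ collapse to $[\Psi_0^t\dot\Psi_0\Psi_3^t\Psi_0]$. The plan is to use (b) in the form $\dot\Psi_0^t=-\Psi_0^t\dot\Psi_0\Psi_0^t$ and $\dot\Psi_0=-\Psi_0\dot\Psi_0^t\Psi_0$ to pull $\dot\Psi_0$ out of every term, and then transpose using (a), yielding
\[
\bigl[\Psi_0^t\dot\Psi_0\Psi_0^t\,\bigl(\Psi_3-\Psi_2\Psi_1^t\Psi_0+\Psi_1\Psi_2^t\Psi_0\bigr)\bigr].
\]
The last ingredient is the coefficient of $z^3$ in $\Psi(x,z)\Psi(x,-z)^t=\Id$, namely $\Psi_0\Psi_3^t-\Psi_1\Psi_2^t+\Psi_2\Psi_1^t-\Psi_3\Psi_0^t=0$; right-multiplication by $\Psi_0$ gives $\Psi_3-\Psi_2\Psi_1^t\Psi_0+\Psi_1\Psi_2^t\Psi_0=\Psi_0\Psi_3^t\Psi_0$, reducing the bracket to $[\Psi_0^t\dot\Psi_0\Psi_3^t\Psi_0]$, the fourth term of the claimed formula. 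The only real subtlety lies in this final bookkeeping step, but once the $k=3$ orthogonality relation is invoked the identification is immediate.
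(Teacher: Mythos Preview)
Your argument is correct and follows essentially the same route as the paper: both start from the chain rule $\partial F/\partial\ve=\dot F-\sum_i\theta^{(2)}_i\dot\theta^{(1)}_i$, use $[M]=[M^t]$, and close with the $z^0$ and $z^3$ coefficients of $\Psi(z)\Psi(-z)^t=\Id$. The only difference is bookkeeping --- the paper keeps the products $(\Psi_0^t\Psi_k)^\cdot$ intact, inserts the harmless zero $(\Psi_0^t\Psi_0)^\cdot\Psi_3^t\Psi_0$ to complete the alternating pattern, and then discards the $\dot\Psi_0^t$-parts via the $z^3$ relation, whereas you expand fully and regroup; note also that your phrase ``exactly cancel the correction term'' is slightly imprecise (the two summands carry a factor $\tfrac12$, so they flip the sign rather than annihilate), though the displayed six-term formula you arrive at is correct.
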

\begin{proof}
We have
\begin{align*}
\frac{\partial F}{\partial\epsilon} & =  \Dot F - \sum_{i=1}^n \frac{\partial F}{\partial t_i} \Dot t_i \\
& = \Dot F - [\Psi_0^t \Psi_2 \left(\Psi_1^t \Psi_0\right)^\cdot] \\
& =  \frac{1}{2}\left[-\left(\Psi_0^t\Psi_3\right)^\cdot\Psi_0^t\Psi_0+\left(\Psi_0^t\Psi_2\right)^\cdot\Psi_1^t\Psi_0+
\Psi_0^t\Psi_2\left(\Psi_1^t\Psi_0\right)^\cdot\right] \\
& {\ } \qquad - [\Psi_0^t \Psi_2 \left(\Psi_1^t \Psi_0\right)^\cdot] \\
& = \frac{1}{2}\left[
-\left(\Psi_0^t\Psi_3\right)^\cdot\Psi_0^t\Psi_0
+\left(\Psi_0^t\Psi_2\right)^\cdot\Psi_1^t\Psi_0 \right. \\
& {\ }  \qquad
\left. -\left(\Psi_0^t\Psi_1\right)^\cdot\Psi_2^t\Psi_0
+\left(\Psi_0^t\Psi_0\right)^\cdot\Psi_3^t\Psi_0
\right] \\
& = \frac{1}{2}\left[
-\Psi_0^t\Dot\Psi_3\Psi_0^t\Psi_0
+\Psi_0^t\Dot\Psi_2\Psi_1^t\Psi_0
-\Psi_0^t\Dot\Psi_1\Psi_2^t\Psi_0
+\Psi_0^t\Dot\Psi_0\Psi_3^t\Psi_0
\right]
\end{align*}
Here the first equation is exactly the formula for the derivative corrected
with respect to the first order change of coordinates with respect to
$\epsilon$, and for the rest we used $\Psi(-z)^t\Psi(z)=\Id$ and the fact that $[A^t]=[A]$.
\end{proof}

We now prove that formulas from  Propositions \ref{.Psi} and
\ref{flat} do coincide (we use the expressions for the derivatives
$\Dot\Psi_i$ from Proposition \ref{sKP} and Theorem \ref{rKP}).

\begin{prop}
Let $s=s(\zeta)\in\g_-$. Then we have
\begin{multline}\label{Giv}
-(s.\Psi_3)\Psi_0^t + (s.\Psi_2)\Psi_1^t - (s.\Psi_1)\Psi_2^t + (s.\Psi_0)\Psi_3^t\\=
- \Psi_0s_1\Psi_2^t + \Psi_1s_1\Psi_1^t - \Psi_2s_1\Psi_0^t +
\Psi_0s_2\Psi_1^t + \Psi_1s_1\Psi_0^t - \Psi_0s_3\Psi_0^t.
\end{multline}
\end{prop}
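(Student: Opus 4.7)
The plan is a direct verification using the explicit formula for $s.\Psi_k$ established in Proposition~\ref{sKP}.

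First I would spell out the four quantities $s.\Psi_k$, $k = 0,1,2,3$, appearing on the left-hand side of~\eqref{Giv}. Applying $s.\Psi_k = \sum_{i=0}^{k-1} \Psi_i s_{k-i}$ gives
\begin{align*}
s.\Psi_0 &= 0, & s.\Psi_1 &= \Psi_0 s_1, \\
s.\Psi_2 &= \Psi_0 s_2 + \Psi_1 s_1, & s.\Psi_3 &= \Psi_0 s_3 + \Psi_1 s_2 + \Psi_2 s_1.
\end{align*}

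Second I would substitute these expressions into the LHS of~\eqref{Giv}. The term $(s.\Psi_0)\Psi_3^t$ vanishes, and the remaining three terms contribute six monomials of the shape $\Psi_i s_j \Psi_k^t$ with $i+j+k=3$: from $-(s.\Psi_3)\Psi_0^t$ one reads off $-\Psi_0 s_3 \Psi_0^t - \Psi_1 s_2 \Psi_0^t - \Psi_2 s_1 \Psi_0^t$; from $(s.\Psi_2)\Psi_1^t$ one reads off $\Psi_0 s_2 \Psi_1^t + \Psi_1 s_1 \Psi_1^t$; and from $-(s.\Psi_1)\Psi_2^t$ one reads off $-\Psi_0 s_1 \Psi_2^t$. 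Collecting these six monomials produces exactly the right-hand side of~\eqref{Giv} (up to reordering of the summands).

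There is no real obstacle here: once Proposition~\ref{sKP} is in hand, the identity reduces to a purely combinatorial bookkeeping check that the alternating-sign pattern on the LHS and the six $\Psi_i s_j \Psi_k^t$ monomials on the RHS match term by term. In particular, no use is made of the twisted-loop constraint $\Psi(x,z)\Psi(x,-z)^t = \Id$ for this step; that relation enters only later, when one assembles $\partial F/\partial\ve$ out of these derivatives via Proposition~\ref{flat} to complete the identification of the two actions.
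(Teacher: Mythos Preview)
Your argument is correct and is exactly what the paper does: its entire proof is the single line ``Follows from Proposition~\ref{sKP},'' and you have written out that substitution explicitly. One small remark: your six monomials include $-\Psi_1 s_2 \Psi_0^t$, whereas the displayed right-hand side of~\eqref{Giv} has $+\Psi_1 s_1 \Psi_0^t$; your term is the correct one (it is what Proposition~\ref{sKP} actually yields, and it matches the corresponding term in Proposition~\ref{.Psi}), so this is a typo in the statement rather than an error in your computation.
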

\begin{proof}
Follows from Proposition  \ref{sKP}.
\end{proof}

\begin{prop}
Let $r\zeta^l\in\g_+$. Then we have
\begin{multline}
-(r\zeta^l.\Psi_3)\Psi_0^t + (r\zeta^l.\Psi_2)\Psi_1^t - (r\zeta^l.\Psi_1)\Psi_2^t + (r\zeta^l.\Psi_0)\Psi_3^t\\
= - \sum_{i=0}^{l+3} (-1)^i \Psi_{l+3-i} r \Psi^t_{l+3-i}.
\end{multline}
\end{prop}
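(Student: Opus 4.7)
The plan is to avoid expanding the triple-sum form of Theorem \ref{rKP} directly, and instead work with the compact generating-function form highlighted in the Remark after Theorem \ref{rKP}. Write $(r\zeta^l.\Psi)(z):=\sum_{k\geq 0}(r\zeta^l.\Psi_k)\,z^k$. The first observation is that, after expanding $\Psi(-z)^t=\sum_j(-1)^j\Psi_j^t z^j$ and matching signs,
\[
-\bigl(r\zeta^l.\Psi_3\bigr)\Psi_0^t+\bigl(r\zeta^l.\Psi_2\bigr)\Psi_1^t-\bigl(r\zeta^l.\Psi_1\bigr)\Psi_2^t+\bigl(r\zeta^l.\Psi_0\bigr)\Psi_3^t\;=\;-\bigl[(r\zeta^l.\Psi)(z)\,\Psi(-z)^t\bigr]_{z^3},
\]
where $[\,\cdot\,]_{z^3}$ denotes the coefficient of $z^3$.

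Next I would feed in the compact form
\[
(r\zeta^l.\Psi)(z)\;=\;\Psi(z)\,r z^{-l}\;-\;X(z)_{-}\,\Psi(z),\qquad X(z):=\Psi(z)\,r z^{-l}\,\Psi(-z)^t,
\]
recorded in the Remark after Theorem \ref{rKP}. Multiplying on the right by $\Psi(-z)^t$ and using the unitarity relation $\Psi(z)\Psi(-z)^t=\Id$ of Proposition~2.4, the product telescopes:
\[
(r\zeta^l.\Psi)(z)\,\Psi(-z)^t\;=\;X(z)-X(z)_{-}\;=\;X(z)_{+}.
\]

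Finally, since $3\geq 0$, the coefficient of $z^3$ in $X(z)_{+}$ coincides with the coefficient of $z^3$ in $X(z)$, and a direct expansion of $X(z)=\sum_{a,b\geq 0}(-1)^b\,\Psi_a\,r\,\Psi_b^t\,z^{a+b-l}$ gives
\[
[X(z)]_{z^3}\;=\;\sum_{i=0}^{l+3}(-1)^i\,\Psi_{l+3-i}\,r\,\Psi_i^t,
\]
so the left-hand side equals $-\sum_{i=0}^{l+3}(-1)^i\,\Psi_{l+3-i}\,r\,\Psi_i^t$. This matches the displayed right-hand side up to an evident typo: the second $\Psi^t$ there should have subscript $i$ rather than $l+3-i$, as is needed for consistency with Proposition \ref{.Psi}.

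There is no serious obstacle. The computation reduces to one line once one recognizes the correct generating-function package; the conceptual point is that multiplying the compact form of Theorem \ref{rKP} by $\Psi(-z)^t$ replaces $X(z)_{-}\,\Psi(z)\Psi(-z)^t$ by $X(z)_{-}$, so the two contributions collapse into $X(z)_{+}$. The only place demanding care is the sign bookkeeping when passing between the component identities $\sum_k(-1)^k(r\zeta^l.\Psi_k)\Psi_{3-k}^t$ and the generating-function identity, which accounts for the overall minus sign.
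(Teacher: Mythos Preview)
Your argument is correct and noticeably cleaner than the paper's. The paper proceeds by substituting the component formula
\[
(r\zeta^l).\Psi_k=\Psi_{l+k}r-\sum_{p=1}^{l}\sum_{q=0}^{l-p}(-1)^{l-p-q}\Psi_q r\Psi_{l-p-q}^t\Psi_{p+k}
\]
for $k=0,1,2,3$ into the left-hand side, then simplifies the resulting triple sum in two stages: first it rewrites the bracket $\Psi_{q+3}\Psi_0^t-\Psi_{q+2}\Psi_1^t+\Psi_{q+1}\Psi_2^t-\Psi_q\Psi_3^t$ as $\sum_{s=1}^q(-1)^s\Psi_{q-s}\Psi_{s+3}^t$ via $\Psi(z)\Psi(-z)^t=\Id$, and then collapses the remaining inner sum $\sum_{s\le q\le l-p}(-1)^q\Psi_{l-p-q}^t\Psi_{q-s}$ using $\Psi(-z)^t\Psi(z)=\Id$. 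Your route bypasses all of this bookkeeping by working directly at the level of generating series: the single observation that $(r\zeta^l.\Psi)(z)\Psi(-z)^t=X(z)-X(z)_-=X(z)_+$ does in one stroke what the paper achieves through two separate applications of the unitarity relations. The trade-off is that the paper's approach uses only the explicit statement of Theorem~\ref{rKP}, whereas yours relies on the compact form recorded in the Remark; but since that Remark is established in the paper, this is a genuine simplification. Your identification of the typo ($\Psi_i^t$ rather than $\Psi_{l+3-i}^t$ on the right-hand side) is also correct, as confirmed by comparison with Proposition~\ref{.Psi}.
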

\begin{proof}
From Theorem \ref{rKP} we obtain
\begin{gather*}
r\zeta^l.\Psi_0=\Psi_l r - \sum_{q=1}^l \sum_{p=0}^{l-q}
(-1)^{l-p-q}
\Psi_p r \Psi^t_{l-p-q} \Psi_q,\\
r\zeta^l.\Psi_1=\Psi_{l+1} r - \sum_{q=1}^l \sum_{p=0}^{l-q}
(-1)^{l-p-q}
\Psi_p r \Psi^t_{l-p-q} \Psi_{q+1},\\
r\zeta^l.\Psi_2=\Psi_{l+2} r - \sum_{q=1}^l \sum_{p=0}^{l-q}
(-1)^{l-p-q}
\Psi_p r \Psi^t_{l-p-q} \Psi_{q+2},\\
r\zeta^l.\Psi_3=\Psi_{l+3} r - \sum_{q=1}^l \sum_{p=0}^{l-q}
(-1)^{l-p-q} \Psi_p r \Psi^t_{l-p-q} \Psi_{q+3}.
\end{gather*}
Substituting these expressions we rewrite the left hand side of \eqref{Giv}
as
\begin{multline}\label{pq}
-\Psi_{l+3} r \Psi_0^t + \Psi_{l+2} r \Psi_1^t - \Psi_{l+1} r \Psi_2^t +
\Psi_{l} r \Psi_3^t\\ + \sum_{q=1}^l \sum_{p=0}^{l-q}
(-1)^{l-q+p} \Psi_p r \Psi_{l-p-q}^t
\left(\Psi_{q+3}\Psi_0^t - \Psi_{q+2}\Psi_1^t + \Psi_{q+1}\Psi_2^t - \Psi_{q}\Psi_3^t\right)
\end{multline}
The sum inside  brackets is equal to
\be
\sum_{s=1}^q (-1)^s \Psi_{q-s} \Psi_{s+3}^t.
\en
Therefore \eqref{pq} is equal to
\begin{multline*}
-\Psi_{l+3} r \Psi_0^t + \Psi_{l+2} r \Psi_1^t - \Psi_{l+1} r \Psi_2^t +
\Psi_{l} r \Psi_3^t\\ +
\sum_{p=0}^{l-1} \sum_{s=1}^l (-1)^{l-p+s} \Psi_p r
\left(\sum_{s\le q\le l-p} (-1)^q \Psi_{l-p-q}^t \Psi_{q-s} \right)
\Psi^t_{s+3}.
\end{multline*}
Since $\Psi^t(-z)\Psi(z)=\Id$, the sum in brackets vanishes unless $s=l-p$. In the latter case the sum
is equal to $(-1)^{l-p}$.
Proposition follows.
\end{proof}

Summarizing, we obtain the following theorem:
\begin{thm}
Let $a$ be an element from the twisted loop Lie algebra.
For any $A$ from the twisted loop group the effect of the derivative
\[
\left. \frac{\pa}{\pa\ve} \Psi(A\exp(\ve a),x,z)\right |_{\ve=0}
\]
on the corresponding Frobenius structure
is given by the genus zero no-descendants restriction of the Y.~-P.~Lee formulas.
\end{thm}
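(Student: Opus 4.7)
The plan is to assemble the theorem from four ingredients already in place. Proposition~\ref{flat} expresses the derivative $\partial F/\partial\ve$ in flat coordinates as a bracketed quadratic form in $\dot\Psi_0,\dots,\dot\Psi_3$; Proposition~\ref{.Psi} expresses the genus-zero restrictions of Y.-P.~Lee's formulas $s.F$ and $(r\zeta^l).F$ in terms of the same matrices $\Psi_k$; and Proposition~\ref{sKP} together with Theorem~\ref{rKP} compute $\dot\Psi_k$ under deformations by $a=s\in\g_-$ or $a=r\zeta^l\in\g_+$. The theorem therefore reduces, case by case, to substituting the formulas for $\dot\Psi_k$ into the expression from Proposition~\ref{flat} and matching with Proposition~\ref{.Psi}. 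I would carry this out as two separate propositions, one per case, and then combine them for the final statement.

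For $a=s\in\g_-$ the substitution is essentially a bookkeeping exercise. Since $s.\Psi_k = \sum_{i=0}^{k-1}\Psi_i s_{k-i}$ involves at most three nonzero terms when $k\le 3$, expanding $-(s.\Psi_3)\Psi_0^t + (s.\Psi_2)\Psi_1^t - (s.\Psi_1)\Psi_2^t + (s.\Psi_0)\Psi_3^t$ produces exactly the six bilinear terms appearing in the $s$-case of Proposition~\ref{.Psi}, and they match term by term after collecting. No cancellation or orthogonality input is needed.

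For $a=r\zeta^l\in\g_+$ the matching is the main obstacle. Theorem~\ref{rKP}'s formula for $(r\zeta^l).\Psi_k$ has two pieces: a simple piece $\Psi_{l+k}r$ and a double-sum correction coming from the nonlinear term. Plugged into the bracketed form, the simple pieces yield four of the summands of the target $-\sum_{i=0}^{l+3}(-1)^i\Psi_{l+3-i}r\Psi_i^t$ from Proposition~\ref{.Psi} (the extreme indices $i\in\{0,1,2,3\}$), and the remaining $l$ terms must come out of the double sum. The real work is telescoping that sum cleanly. After interchanging summations I would use the identity $\Psi_{q+3}\Psi_0^t - \Psi_{q+2}\Psi_1^t + \Psi_{q+1}\Psi_2^t - \Psi_q\Psi_3^t = \sum_{s=1}^{q}(-1)^s\Psi_{q-s}\Psi_{s+3}^t$ and re-index so that the summand contains an inner sum of the shape $\sum_{s\le q\le l-p}(-1)^q\Psi_{l-p-q}^t\Psi_{q-s}$. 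The key point is the orthogonality $\Psi(x,-z)^t\Psi(x,z)=\Id$ from Section~2, whose component form $\sum_{m+k=N}(-1)^m\Psi_m^t\Psi_k = \delta_{N,0}\Id$ forces the inner sum to vanish unless $s=l-p$, in which case it contributes $(-1)^{l-p}\Id$. After this collapse and a single relabeling $i=s+3$, the surviving terms combine with the simple pieces to reproduce exactly the target expression, completing the proof.
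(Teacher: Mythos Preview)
Your proposal is correct and follows essentially the same route as the paper: split into the $\g_-$ and $\g_+$ cases, substitute Proposition~\ref{sKP} and Theorem~\ref{rKP} into Proposition~\ref{flat}, and match against Proposition~\ref{.Psi}. In particular, your treatment of the $r\zeta^l$ case---using the identity $\Psi_{q+3}\Psi_0^t - \Psi_{q+2}\Psi_1^t + \Psi_{q+1}\Psi_2^t - \Psi_q\Psi_3^t = \sum_{s=1}^{q}(-1)^s\Psi_{q-s}\Psi_{s+3}^t$ and then collapsing the inner $q$-sum via $\Psi(-z)^t\Psi(z)=\Id$ to the single term $s=l-p$---is exactly the argument the paper gives.
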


\section*{Acknowledgments}
The work of EF was partially supported
by the RFBR grants 09-01-00058, 07-02-00799, and NSh-3472.2008.2, by Pierre Deligne fund
based on his 2004 Balzan prize in mathematics and by Alexander von
Humboldt Fellowship. The work of JvdL was partially supported by the European Science
Foundation Program MISGAM and the Marie Curie RTN  ENIGMA. The work of SS
was partially supported by the Vidi grant of NWO.

\end{document}